\documentclass[10pt,twoside,reqno,draft]{amsart}
\usepackage{amsmath}
\usepackage{amssymb}
\usepackage{amsfonts}
\usepackage{amsthm}
\usepackage{graphicx}
\usepackage[initials]{amsrefs}
\usepackage{fancyhdr}
\usepackage{hyperref}

\usepackage[all,cmtip]{xy}

\newtheorem{theorem}{Theorem}[section]
\newtheorem{corollary}[theorem]{Corollary}
\newtheorem{lemma}[theorem]{Lemma}

\theoremstyle{definition}

\newtheorem{remark}[theorem]{Remark}

\begin{document}

\title[Completeness norms]{\large Completeness of constructible norms}
\author[Acosta-Portilla, Sanchez-Nungaray, Ramos-Vázquez and Garrido-Ramírez]{Juan Rafael Acosta-Portilla$^{1}$, Armando Sanchez-Nungaray$^{2}$, Gerardo Ramos-Vázquez$^{3}$ and Lizbeth Yolanda Garrido-Ramírez$^{4}$}
\date{July 2025}
\maketitle

\begin{center}
{\footnotesize
$^{1}$Instituto de Investigaciones y Estudios Superiores Económicos y Sociales, Universidad Veracruzana, México\\
	E-mail: juaacosta@uv.mx\\
\medskip
$^{2}$Facultad de Matemáticas, Universidad Veracruzana, México\\
   E-mail: armsanchez@uv.mx\\
\medskip
$^{3}$Facultad de Matemáticas, Universidad Veracruzana, México\\
    E-mail: ger.ramosv@gmail.com\\
\medskip
$^{4}$Facultad de Economía, Universidad Veracruzana, México\\
	E-mail: ligarrido@uv.mx 
}
\end{center}

\bigskip

{\footnotesize
\noindent
{\bf Abstract.}
In the present work we study for a given Banach space the quotient of the family of equivalent norms modulus the collinearity relationship and prove that under a Banach-Mazur type metric this family is a complete metric space, and we also prove the completeness of the family of constructible norms for the Banach space of bounded Lipschitzian mappings between convex sets of a Banach space.

\noindent
{\bf Key Words and Phrases}:
Renorming, constructible norm, completeness, lipschitzian mapping.

\noindent {\bf 2020 Mathematics Subject Classification}: 
46A99, 46B03, 46B20, 46H20.
}

\bigskip

\section{Introduction}
One of the uses that Renorming Theory has had is to give examples and counterexamples of geometric properties; for a detailed explanation consult \cite{deville1993smoothness, fabian2001functional, godefroy2001renormings, guirao2022renormings}. In the context of the Metric Fixed Point Theory \cite{agarwal2009fixedpointbook, GoebelKirk1990TopicsFixedPoint, kirk2002handbook, takahashi2000book}, one of the great open problems, to date partially proven, is that of the equivalence between the FPP and reflexivity, in 2007 PK Lin \cite{lin2008} proved that there is a renorm of $\ell_1$ with the FPP, while in 2008 Tomás Dominguez \cite{tomas:2009} proved that every reflexive space can be renormed to have the FPP. It is from this that this work intends to respond to question: What is the structure of the family of equivalent norms?

In this article, we expand on the work done by Zizler et al. 1982 \cite{fabianZajicekZizler1982residualityRotundNorms}, Benavides and Phothi 2008 \cite{dominguezPhoti2008porosity}, 2010 \cite{dominguezPhoti2010genericityinsomebanach} on metrics for the space of equivalent norms for a given Banach space and on the work done by Acosta-Portilla and Garrido-Ramírez 2025 \cite{acosta2025constructible} on constructible norms for the space of bounded lipschitzian mappings. Specifically, we investigate the metric structure of the family of equivalent norms by endowing it with a Banach-Mazur type metric, and in particular, we study the metric structure of constructible norms, proving that under the aforementioned metric, they are a complete metric space.

This work is divided into seven sections. The first is this introduction. The second presents the metric studied throughout the text. In the third section, previous results and their relation to the proposed metric are presented. In the fourth section, a proof of the completeness of the space of equivalent norms is given. The fifth section presents the constructible norms and a series of existing results related to them. The sixth section consists of several metric results concerning the constructible norms in relation to the metric formulated in the second section. Finally, the seventh section establishes the completeness of the space of constructible norms.

\section{Distance and collinearity between norms}
Given a Banach space $X$, we will call $\mathcal{N}(X)$ the family of equivalent norms over $X$. Let $\| \cdot \|_1 , \| \cdot \|_2 \in \mathcal{N}(X)$, we will say that $\| \cdot \|_1$ is collinear to $\| \cdot \|_2$ or simply they are collinear if exists $c > 0$ such that

\begin{equation}
\| x \|_2 = c \| x \|_1 
\end{equation}

\noindent for each $x \in X$. On $\mathcal{N}(X)$ we define an equivalence relation $\sim_c$ as $\| \cdot \|_1 \sim_c \| \cdot \|_2$ if $\| \cdot \|_1$ and $\| \cdot \|_2$ are collinear, and we will call

\begin{equation}\label{quotient of collinear norms}
\mathcal{N}'(X) = \mathcal{N}(X) / \sim_c .
\end{equation}

\noindent For each $ \| \cdot \|_1 , \| \cdot \|_2 \in \mathcal{N}(X)$ there exist sharp constants $u \geq l > 0$ such that 

\begin{equation}
l \| x \|_2 \leq \| x \|_1 \leq  u \| x \|_2
\end{equation} 

\noindent for each $x \in X$, under this assumptions we define 

\begin{equation}\label{rho definition}
\varrho (\| \cdot \|_1 , \| \cdot \|_2) = \frac{u}{l}.
\end{equation}

\noindent In the following lemma we will resume the principal properties of function $\varrho$.

\begin{lemma}\label{properties of rho}
Let $X$ be a Banach space, $\| \cdot \|_1 , \| \cdot \|_2 , \| \cdot \|_3 \in \mathcal{N}(X)$ and $r , s > 0$. Then 

\begin{itemize}
\item[1)] $\varrho (\| \cdot \|_1 , \| \cdot \|_2) = \varrho (\| \cdot \|_2 , \| \cdot \|_1)$.
\item[2)] $\varrho (r \| \cdot \|_1 , s \| \cdot \|_2) = \varrho (\| \cdot \|_1 , \| \cdot \|_2)$.
\item[3)] $\varrho(\| \cdot \|_1 , \| \cdot \|_2) \leq \frac{r}{s}$ whenever $s \| x \|_2 \leq  \| x \|_1 \leq r \| x \|_2 $ for each $x \in X$.
\item[4)] $\varrho (\| \cdot \|_1 , \| \cdot \|_2) \leq \varrho (\| \cdot \|_1 , \| \cdot \|_3) \, \varrho (\| \cdot \|_2 , \| \cdot \|_3)$.
\item[5)] $\varrho (\| \cdot \|_1 , \| \cdot \|_2) = 1$ if and only if $\| \cdot \|_1 \sim_c \| \cdot \|_2$.
\end{itemize}
\end{lemma}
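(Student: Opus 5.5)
The plan is to make the sharp constants explicit and reduce every item to elementary manipulations of suprema and infima. For $\| \cdot \|_1 , \| \cdot \|_2 \in \mathcal{N}(X)$ I will write
\[
u_{12} = \sup_{x \neq 0} \frac{\| x \|_1}{\| x \|_2}, \qquad l_{12} = \inf_{x \neq 0} \frac{\| x \|_1}{\| x \|_2},
\]
which are finite and positive because the norms are equivalent. Sharpness of the constants in the definition of $\varrho$ means exactly that $u = u_{12}$ and $l = l_{12}$, so $\varrho(\| \cdot \|_1 , \| \cdot \|_2) = u_{12}/l_{12}$. I will use this formula throughout.

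For 1), the ratio $\| x \|_2 / \| x \|_1$ is the reciprocal of $\| x \|_1 / \| x \|_2$. Hence $u_{21} = 1/l_{12}$ and $l_{21} = 1/u_{12}$, so $u_{21}/l_{21} = u_{12}/l_{12}$.

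For 2), replacing the norms by $r\| \cdot \|_1$ and $s\| \cdot \|_2$ multiplies every ratio by $r/s$. The supremum and infimum are both scaled by $r/s$, and this factor cancels in the quotient.

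For 3), the hypothesis says that $s$ is a lower bound and $r$ is an upper bound for the ratios. Hence $l_{12} \geq s$ and $u_{12} \leq r$, which gives $\varrho \leq r/s$.

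For 4), I will factor each ratio as
\[
\frac{\| x \|_1}{\| x \|_2} = \frac{\| x \|_1}{\| x \|_3} \cdot \frac{\| x \|_3}{\| x \|_2}.
\]
This gives $u_{12} \leq u_{13} u_{32}$ and $l_{12} \geq l_{13} l_{32}$. Therefore $\varrho(\| \cdot \|_1 , \| \cdot \|_2) \leq \varrho(\| \cdot \|_1 , \| \cdot \|_3)\, \varrho(\| \cdot \|_3 , \| \cdot \|_2)$. Applying 1) to the last factor turns it into $\varrho(\| \cdot \|_2 , \| \cdot \|_3)$, which is the stated form.

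For 5), if $\| \cdot \|_2 = c \| \cdot \|_1$, then the ratio $\| x \|_1 / \| x \|_2$ is constantly $1/c$. So $u_{12} = l_{12}$ and $\varrho = 1$. Conversely, $\varrho = 1$ forces $u_{12} = l_{12} =: \lambda$, so $\| x \|_1 = \lambda \| x \|_2$ for every $x \neq 0$ (and trivially for $x = 0$). Then $\| \cdot \|_2 = \lambda^{-1} \| \cdot \|_1$, so the norms are collinear.

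None of these steps is a real obstacle. The only point that needs care is the first one: making precise that "sharp" means the best constants, namely the supremum and infimum of the ratio, since otherwise $\varrho$ would not be well defined. Once that identification is in place, everything follows from sup/inf arithmetic, with the symmetry in 1) used again to match the argument order in 4).
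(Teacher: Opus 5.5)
Your proof is correct. Identifying the sharp constants with $\sup_{x\neq 0}\|x\|_1/\|x\|_2$ and $\inf_{x\neq 0}\|x\|_1/\|x\|_2$ tacitly needs $X\neq\{0\}$, which the paper also assumes. With that identification, each of 1)--5) follows from the sup/inf manipulations you describe, including using 1) to put 4) in the stated order.

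The paper gives no proof of this lemma. It lists the properties and immediately uses them to obtain Theorem \ref{d is a distance}, so your argument supplies the verification the paper leaves to the reader rather than following or departing from an existing proof.
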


Using the results of the previous lemma we conclude that the set of equivalent norms of a Banach space quotient the collinear relation $\mathcal{N}'(X)$ is a metric space if we define 

\begin{equation}\label{definition distance d}
d\left( \overline{\| \cdot \|}_1 , \overline{\| \cdot \|}_2 \right) = \log (\varrho ( \|\cdot \|_1 , \| \cdot \| _2 ))
\end{equation}

\noindent for any $\| \cdot \|_1 \in \overline{\| \cdot \|}_1$ and $\| \cdot \|_2 \in \overline{\| \cdot \|}_2$, we summarize this in the following 

\begin{theorem}\label{d is a distance}
Let $X$ be a Banach space and $d$ as in (\ref{definition distance d}). Then $(\mathcal{N}'(X) ,  d)$ is a metric space.
\end{theorem}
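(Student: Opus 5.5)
The plan is to derive every metric axiom for $d$ from Lemma \ref{properties of rho}, after first checking that $d$ is well defined on the quotient $\mathcal{N}'(X)$.

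\textbf{Well-definedness.} Suppose $\| \cdot \|_1' \sim_c \| \cdot \|_1$ and $\| \cdot \|_2' \sim_c \| \cdot \|_2$. Then $\| \cdot \|_1' = r\| \cdot \|_1$ and $\| \cdot \|_2' = s\| \cdot \|_2$ for some $r,s>0$. By item 2) of Lemma \ref{properties of rho}, $\varrho(\| \cdot \|_1',\| \cdot \|_2') = \varrho(\| \cdot \|_1,\| \cdot \|_2)$. Hence $d$ does not depend on the chosen representatives.

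\textbf{Nonnegativity and identity of indiscernibles.} Since the sharp constants satisfy $u \ge l > 0$, we have $\varrho \ge 1$ and therefore $d \ge 0$. Moreover $d = 0$ exactly when $\varrho = 1$. By item 5) this happens exactly when the two norms are collinear, that is, when their classes coincide.

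\textbf{Symmetry.} This is item 1) after taking logarithms.

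\textbf{Triangle inequality.} Apply item 4) with the roles arranged suitably:
\[
\varrho(\| \cdot \|_1,\| \cdot \|_2) \le \varrho(\| \cdot \|_1,\| \cdot \|_3)\,\varrho(\| \cdot \|_2,\| \cdot \|_3) = \varrho(\| \cdot \|_1,\| \cdot \|_3)\,\varrho(\| \cdot \|_3,\| \cdot \|_2),
\]
where the equality uses symmetry. Taking $\log$, which is increasing and turns products into sums, gives $d(\overline{\| \cdot \|}_1,\overline{\| \cdot \|}_2) \le d(\overline{\| \cdot \|}_1,\overline{\| \cdot \|}_3)+d(\overline{\| \cdot \|}_3,\overline{\| \cdot \|}_2)$. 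Finiteness of $d$ follows because equivalent norms give finite sharp constants.

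The only genuinely substantive content sits in items 4) and 5), which are assumed. For instance, item 5) needs that $u = l$ forces $\| x \|_1 = u\| x \|_2$. So no step is a real obstacle. The one point requiring care is the well-definedness on classes, and it is handled by item 2).
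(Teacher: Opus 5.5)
Your proof is correct and takes the same route as the paper, which simply asserts that the metric axioms follow from Lemma \ref{properties of rho}. You spell this out item by item: item 2) for well-definedness on classes, $u \ge l$ for nonnegativity, item 5) for identity of indiscernibles, and items 1) and 4) for symmetry and the triangle inequality.
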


\begin{remark}
The metric $d$ is like the Banach-Mazur distance restricted to the identity operator, see page 15 of \cite{frabian2010banachspacetheroybook}. That is,

\begin{equation}
\begin{split}
d(\| \cdot \|_1 , \| \cdot \|_2) & =  \log \left( \inf  \{ \| T \| \| T^{-1} \|  \, | \, T = I  \} \right) \\ 
                                 & = \log (\| I \| \| I^{-1}\| ).
\end{split}
\end{equation}

\noindent However, the first major difference with the Banach-Mazur distance is that with respect to $d$ the distance $0$ is characterized by collinear norms, unlike $0$ in the Banach-Mazur distance does not imply isometry, see page 276 exercise 5.44 on \cite{frabian2010banachspacetheroybook}. The second and transcendental difference is the completeness of the space, which will be prove later. In addition, it is better to study $d$ as it was constructed in Theorem \ref{d is a distance} since the function $\varrho$ is a measure of change between the Lipschitz constants under renormings.
\end{remark}

We will say that a mapping $T : C \to D$ with $C$ and $D$ subsets of a Banach space $(X, \| \cdot \|)$ is lipschitzian if it has finite Lipschitz value

\begin{equation}
K(T, \| \cdot \|) = \displaystyle{ \sup \left\{ \left. \frac{\| Tx - Ty \|}{\| x - y \|  } \,  \right| \, x , y \in C ,  x \neq y \right\}}
\end{equation}

\noindent The relation of Lipschitz values and $\varrho$ is summarized in the following

\begin{lemma}\label{lemma propertis of rho with K}
Let $T:C \to C$ be a lipschitzian function with $C$ a convex, closed and bounded subset of a Banach space $X$, $\| \cdot \|_1 , \| \cdot \|_2 \in \mathcal{N}(X)$ and $M = \varrho (\| \cdot  \|_1 , \| \cdot \|_2)$. Then
\begin{itemize}
\item[1)]  $K(T , \| \cdot \|_1) = K(T , \| \cdot \|_2)$ whenever $\| \cdot \|_1 \sim_c \| \cdot \|_2$.
\item[2)] $M^{-1} \, K(T , \| \cdot \|_1) \leq K(T , \| \cdot \|_2) \leq M \, K(T , \| \cdot \|_1)$.
\end{itemize}
\end{lemma}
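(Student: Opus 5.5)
The proof is a direct computation from the definitions of $K(T,\cdot)$ and $\varrho$. The only tool needed from earlier is Lemma \ref{properties of rho}. Throughout, $T$ maps $C$ into $C\subset X$, so all quantities $\|Tx-Ty\|_i$ make sense. Since all the norms are equivalent, $T$ is lipschitzian for one of them if and only if it is for the other, so both Lipschitz values are finite. If $C$ is a single point, both suprema are over the empty set and agree under any fixed convention. We may therefore assume $C$ contains two distinct points.

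For 1), suppose $\|\cdot\|_1\sim_c\|\cdot\|_2$, so there is $c>0$ with $\|z\|_2=c\|z\|_1$ for every $z\in X$. Then for $x\neq y$ in $C$,
\begin{equation*}
\frac{\|Tx-Ty\|_2}{\|x-y\|_2}=\frac{c\,\|Tx-Ty\|_1}{c\,\|x-y\|_1}=\frac{\|Tx-Ty\|_1}{\|x-y\|_1}.
\end{equation*}
The two sets of quotients coincide, so their suprema coincide. Alternatively, 1) follows from 2) together with item 5) of Lemma \ref{properties of rho}, since then $M=1$.

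For 2), take the sharp constants $u\ge l>0$ with $l\|z\|_2\le\|z\|_1\le u\|z\|_2$ for every $z\in X$, so that $M=u/l$. For $x\neq y$ in $C$ we chain the inequalities:
\begin{equation*}
\|Tx-Ty\|_2\le \tfrac{1}{l}\|Tx-Ty\|_1\le \tfrac{1}{l}K(T,\|\cdot\|_1)\,\|x-y\|_1\le \tfrac{u}{l}K(T,\|\cdot\|_1)\,\|x-y\|_2 .
\end{equation*}
Taking the supremum over $x\neq y$ gives $K(T,\|\cdot\|_2)\le M\,K(T,\|\cdot\|_1)$.

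Exchanging the roles of the two norms, the same argument gives $K(T,\|\cdot\|_1)\le \varrho(\|\cdot\|_2,\|\cdot\|_1)\,K(T,\|\cdot\|_2)$. By item 1) of Lemma \ref{properties of rho}, $\varrho(\|\cdot\|_2,\|\cdot\|_1)=M$. Dividing by $M$ yields the left-hand inequality $M^{-1}K(T,\|\cdot\|_1)\le K(T,\|\cdot\|_2)$.

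There is no real obstacle. The points requiring care are using the inequalities in the correct direction, namely $\tfrac1l$ for the image difference and $u$ for the domain difference, and invoking the symmetry of $\varrho$ to obtain the reverse bound. Convexity, closedness and boundedness of $C$ play no role beyond ensuring that $T$ maps into the set where the norms are compared.
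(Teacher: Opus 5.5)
Your proof is correct. The chain $\|Tx-Ty\|_2\le l^{-1}\|Tx-Ty\|_1\le l^{-1}K(T,\|\cdot\|_1)\|x-y\|_1\le (u/l)\,K(T,\|\cdot\|_1)\|x-y\|_2$, the symmetry of $\varrho$ for the reverse bound, and the cancellation of $c$ for item 1) are exactly the routine verification needed; the paper states this lemma without giving a proof, so there is no separate argument to compare with.
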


\begin{remark}\label{Remark 1}
By incise $1)$ of Lemma \ref{lemma propertis of rho with K}, every lipschitzian function $T: C \to C$ has a well defined Lipschitz value on each $ \overline{\| \cdot \|} \in \mathcal{N}'(X)$, defined by

\begin{equation}
K\left(T , \overline{\| \cdot \|}\right) := K(T , \| \cdot \|)
\end{equation}
\noindent  for any $\| \cdot \| \in \overline{\| \cdot \|}$, so if we are only interested in studying these constants we can limit ourselves to using the quotient space $\mathcal{N}'(X)$ instead of $\mathcal{N}(X)$.

Incise $2)$ of Lemma \ref{lemma propertis of rho with K} has two important consequences:

\begin{itemize}
\item[i)] Any sequence of operators $(T_n)$ such that $K(T_n , \| \cdot \|) \to 0$ for some $\| \cdot \| \in \mathcal{N}(X)$, also converges to $0$ over each equivalent norm.
\item[ii)] Under every convergent sequence $\left(\overline{\| \cdot \|}_n\right)$ in $\mathcal{N}'(X)$ the respective Lipschitz constants $K(T , \| \cdot \|_n)$ are convergent to $K(T , \| \cdot \|)$ with $ \overline{\| \cdot \|}$ the $d$-lim of $\left(\overline{\| \cdot \|}_n\right)$.
\end{itemize} 
\end{remark}

\section{Comparison of distances for $\mathcal{N}(X)$}

In 1982, Zizler, et al. \cite{fabianZajicekZizler1982residualityRotundNorms} considered the following infinity metric for $\mathcal{N}(X)$. If $X = (X , \| \cdot \|)$ and $B_X = \{ x \in  X \, | \, \| x \| \leq 1 \}$. Then for each $p , q \in \mathcal{N}(X)$ they defined

\begin{equation}
d_{B_X}(p, q) = \sup \{ | p(x ) - q(x) | \, | \, x \in B_X \}.
\end{equation}

\noindent This is clearly the metric of uniform convergence restricted to the unit ball and under this assumption $(\mathcal{N}(x) , d_{B_X})$ is an open subset of the complete metric space formed of all seminorms over $X$ with the same infinity metric. Moreover, it is a Baire space. 

In 2008 Thomas and Supaluk \cite{dominguezPhoti2008porosity} considered the set $\mathcal{E}(X)$ defined as the subset of  $\mathcal{N}(X)$ which members are normalized with respect to any fixed unitary ball $B$, in symbols, 

\begin{equation}\label{normalized norms set definition}
\mathcal{E}(X) = \left\{  p \in \mathcal{N}(X) \, \left| \, \displaystyle{\sup_{x \in B}} \, p(x) = 1 \right. \right\}.
\end{equation}

\noindent and in this work they showed that $(\mathcal{E}(X) , d_{B_X})$ is metrically equivalent to $(\mathcal{E}(X) , d)$, where $d$ is the Banach-Mazur-type metric defined in the previous section. However, they proved that they are not uniformly equivalent. Thus, in other words, the metrics induces the same topology but not the same Cauchy sequences, in consequence they does not necessarily has the same completion.

In 2010 Thomas and Supaluk \cite{dominguezPhoti2010genericityinsomebanach} related the infinity metric to the Hausdorff metric as follows. If $C ,  D \subset X$ are closed and bounded sets, then we define its Hausdorff distance by

\begin{equation}
H(C ,  D) = \max \left\{\displaystyle{\sup_{c \in C}} \, d(c , D) , \displaystyle{\sup_{d' \in D}} \,  d(d' , C) \right\}
\end{equation} 

\noindent where $d(c , D) = \inf \{ \| c -  d' \| \, | \, d '\in D \}$ and $d(d' , C) = \inf \{ \| d' - c \| \, | \, c \in C \}$. For every $p \in \mathcal{N}(X)$ we have its respective $p$-unitary ball $B_p = \{ x \in X \, | \, p(x) \leq 1 \}$. Hence the distance between $p  , q \in \mathcal{N}(X)$ is defined by

\begin{equation}
d_H(p , q) := H(B_p , B_q).
\end{equation}

\noindent Then, in such paper \cite{dominguezPhoti2010genericityinsomebanach}, they proved that over $\mathcal{N}(X)$ the metrics $d_{B_X}$ and $d_H$ are equivalent, but not uniformly equivalent. Now we are going to prove that the spaces $\mathcal{N}'(X)$ and $\mathcal{E}(X)$ are isometrically isomorphic. 

\begin{theorem}
Let $X$ be a Banach space, $\mathcal{E}(X)$ as in (\ref{normalized norms set definition}), $\mathcal{N}'(X)$ as in (\ref{quotient of collinear norms}) and $d$ as in (\ref{definition distance d}). Then  $(\mathcal{N}'(X), d)$ is isometrically isomorphic to $(\mathcal{E}(X), d )$.
\end{theorem}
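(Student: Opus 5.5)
The plan is to construct an explicit bijection $\Phi : \mathcal{N}'(X) \to \mathcal{E}(X)$ by normalization, and then check that it preserves $d$. On $\mathcal{E}(X)$, $d$ means $d(p,q) = \log \varrho(p,q)$, which is a genuine metric there by Lemma \ref{properties of rho}.

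\textbf{Step 1 (the normalization map).} Fix the bounded unit ball $B$ used in (\ref{normalized norms set definition}). For $p \in \mathcal{N}(X)$ put $\sigma(p) = \sup_{x \in B} p(x)$.
\begin{itemize}
\item Since $p$ is equivalent to the original norm and $B$ is bounded and absorbing (it is the unit ball of an equivalent norm), we have $0 < \sigma(p) < \infty$.
\item For $c>0$ we have $\sigma(cp) = c\,\sigma(p)$, so $p/\sigma(p)$ depends only on the collinearity class $\overline{p}$.
\end{itemize}
Define $\Phi(\overline{p}) = p/\sigma(p)$. This is a positive multiple of a norm in $\mathcal{N}(X)$, hence again in $\mathcal{N}(X)$, and by construction its supremum over $B$ equals $1$. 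So $\Phi$ is well defined with values in $\mathcal{E}(X)$.

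\textbf{Step 2 (bijectivity).}
\begin{itemize}
\item \emph{Surjectivity.} For $q \in \mathcal{E}(X)$ we have $\sigma(q)=1$, so $\Phi(\overline{q}) = q$.
\item \emph{Injectivity.} Suppose $p/\sigma(p) = q/\sigma(q)$. Then $q = (\sigma(q)/\sigma(p))\,p$, so $p \sim_c q$ and $\overline{p}=\overline{q}$.
\end{itemize}
Equivalently, each collinearity class meets $\mathcal{E}(X)$ in exactly one point. Indeed, if $cp$ and $p$ both lie in $\mathcal{E}(X)$, then $c\,\sigma(p) = \sigma(p) = 1$ forces $c=1$.

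\textbf{Step 3 (isometry).} Take representatives $p \in \overline{p}$ and $q \in \overline{q}$. By the definition (\ref{definition distance d}) and item 2) of Lemma \ref{properties of rho},
\[
d(\Phi(\overline{p}),\Phi(\overline{q})) = \log \varrho\bigl(\sigma(p)^{-1}p,\ \sigma(q)^{-1}q\bigr) = \log \varrho(p,q) = d(\overline{p},\overline{q}).
\]
Hence $\Phi$ is a distance-preserving bijection, with inverse $q \mapsto \overline{q}$.

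\textbf{Main obstacle.} The only point needing care is Step 1. One must confirm that $\sigma(p)$ is finite and strictly positive, which uses the equivalence of $p$ with the norm defining $B$. One must also note that $d$ restricted to $\mathcal{E}(X)$ is a true metric, not just a pseudometric. This holds because distance zero forces collinearity by item 5) of Lemma \ref{properties of rho}, and collinear elements of $\mathcal{E}(X)$ coincide by the uniqueness observation in Step 2.
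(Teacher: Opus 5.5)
Your proposal is correct and follows essentially the same route as the paper. Like the paper, you normalize by the supremum over $B$, observe that this map is constant on collinearity classes and separates distinct classes, and obtain the isometry from item 2) of Lemma \ref{properties of rho}. Your explicit checks that $0<\sigma(p)<\infty$ and that the map is surjective fill in details the paper leaves implicit.
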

\begin{proof}
We note that $\mathcal{E}(X)$ is constructed as the image of the function $P$ defined from $\mathcal{N}(X)$ to itself by

\begin{equation}
P \| \cdot \| = \left( \displaystyle{\sup_{x \in B} } \, \| x \| \right)^{-1} \, \| \cdot \|.
\end{equation}

\noindent First, we can prove that $P$ is invariant under collinear norms. That is, if $\| \cdot \|_2 = c\| \cdot \|_1$ for some $c > 0$, then

\begin{equation}
\begin{split}
P \| \cdot \|_2 & = \left( \displaystyle{\sup_{x \in B} } \, \| x \|_2 \right)^{-1} \, \| \cdot \|_2 \\
                & = \left( \displaystyle{\sup_{x \in B} } \, \| x \|_2 \right)^{-1} \, c \| \cdot \|_1 \\
                & = \left( \displaystyle{\sup_{x \in B} } \, c^{-1} \| x \|_2 \right)^{-1} \, \| \cdot \|_1 \\
                & = \left( \displaystyle{\sup_{x \in B} } \, \| x \|_1 \right)^{-1} \, \| \cdot \|_1  \\
                & = P \| \cdot \|_1.
\end{split}
\end{equation}

\noindent Second, in fact $P \| \cdot \|_1 = P \| \cdot \|_2$ implies $\| \cdot \|_1 \sim_c \| \cdot \|$, this follow direct from 

\begin{equation}
\begin{split}
\left( \displaystyle{\sup_{x \in B} } \, \| x \|_2 \right)^{-1} \, \| \cdot \|_2 & = P \| \cdot \|_2 \\
                                                                                   & = P \| \cdot \|_1 \\
                                                                                   & = \left( \displaystyle{\sup_{x \in B_x} } \, \| x \|_1 \right)^{-1} \, \| \cdot \|_1 
\end{split}
\end{equation}

\noindent Hence $\| x \|_2 = c \| x \|_1$ with $c = \left( \displaystyle{\sup_{x \in B} } \, \| x \|_2 \right) \left( \displaystyle{\sup_{x \in B} } \, \| x \|_1 \right)^{-1} $. Thus $\| \cdot \|_1 \sim_c \| \cdot \|_2$ if and only if $P\| \cdot \|_1 = P \| \cdot \|_2$. Hence the following conmutative diagram holds.

\begin{equation}\label{conmutative diagram 1}
\xymatrix{
\mathcal{N}(X) \ar[d]^{P} \ar[r]^{\sim_c} &   \mathcal{N}'(X) \ar[dl]\\
               \mathcal{N}(X) / P \simeq \mathcal{E}(X) \ar[ur] &  
}
\end{equation}

\noindent That is, as sets $\mathcal{N}'(X) \simeq \mathcal{E}(X)$. By (\ref{conmutative diagram 1}) note that the constructed association sends each class to an element of itself $\overline{\| \cdot \|} \mapsto c \| \cdot \| $, so incise 2) of Lemma \ref{properties of rho}  ensures the isometry. 
\end{proof}

\begin{remark}
The reason why it is more convenient to study $(\mathcal{N}'(X) ,  d)$ rather than $(\mathcal{E}(X) ,  d)$ is that the elements of $\mathcal{N}'(X)$ are more flexible in manipulating them, and the definition of $\mathcal{N}'(X)$ does not depend on the choice of a base norm for normalizing the elements.
\end{remark}

\section{Completeness of $\mathcal{N}'(X)$}

In Theorem \ref{N is complete} we will show that $(\mathcal{N}'(X) , d)$ is a complete metric space, however for this we require some definitions and results regarding ultrafilters which can be consulted in \cite{aksoy1990nonstandard} and \cite{heinrich1980ultraproducts}.

A nontrivial ultrafilter $\mathfrak{U}$ over a family of index $I$ is a collection of subsets of $I$ such that

\begin{itemize}
\item[1)] $\emptyset \notin \mathfrak{U}$.
\item[2)] $U \cap V \in \mathfrak{U}$ when $U, V \in \mathfrak{U}$.
\item[3)] If $A \subset  I$ is such that exists $U \in \mathfrak{U}$ with $A \supset U$, then $A \in \mathfrak{U}$. 
\item[4)] For every $A \subset I$, either $A$ or $I \setminus A$ belongs to $\mathfrak{U}$.
\item[5)] There is no finite set $A \subset I$ belonging to $\mathfrak{U}$.
\end{itemize} 

\noindent If $(x_i)_{i\in I}$ is an indexed family on a topological space $(X , \tau_X)$, we will say that $(x_i)$ converges to $x$ with respect to the ultrafilter $\mathfrak{U}$ on $I$ if for every neighborhood $U_x$ of $x$ in $\tau_X$ we have that

\begin{equation}
\{ i \in I \, | \, x_i \in U_x \} \in \mathfrak{U}.
\end{equation}

\noindent in such case we will write

\begin{equation}
x = \displaystyle{\lim_\mathfrak{U}} \, x_i  .
\end{equation}

\noindent It is a well known result that ultrafilters characterize compactness

\begin{theorem}\label{filters convergence over compacts}
Let $K$ be a Hausdorff topological space. Then the following statements are equivalent:

\begin{itemize}
\item[1)] $K$ is compact.
\item[2)] Every indexed family $(x_i)_{i \in I}$ on $K$ is convergent over any ultrafilter $\mathfrak{U}$ over $I$. 
\end{itemize} 
\end{theorem}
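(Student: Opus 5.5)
We prove the two implications separately, using only the definition of convergence along an ultrafilter and the open-cover definition of compactness. Hausdorffness is needed only to make $\lim_{\mathfrak{U}} x_i$ unique, which justifies the notation; it plays no role in the equivalence itself. Note also that, under the paper's definition (condition 5), a nontrivial ultrafilter can exist only on an infinite index set $I$. So statement 2) is vacuous for finite $I$, and we may assume $I$ is infinite throughout.

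\emph{1) implies 2).} Let $K$ be compact, $(x_i)_{i\in I}$ a family in $K$ and $\mathfrak{U}$ an ultrafilter on $I$. Suppose, for contradiction, that the family converges to no point along $\mathfrak{U}$. Then each $x\in K$ has an open neighborhood $V_x$ with $\{i \,|\, x_i\in V_x\}\notin\mathfrak{U}$. By property 4), the complement $A_x=\{i \,|\, x_i\notin V_x\}$ lies in $\mathfrak{U}$. By compactness, finitely many sets $V_{y_1},\dots,V_{y_n}$ cover $K$. By property 2), $A_{y_1}\cap\dots\cap A_{y_n}\in\mathfrak{U}$. This intersection is empty, since every $x_i$ lies in some $V_{y_k}$. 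That contradicts property 1).

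\emph{2) implies 1).} Suppose $K$ is not compact, and take an open cover $\mathcal{V}$ with no finite subcover; $\mathcal{V}$ is necessarily infinite. Let $I$ be the set of finite subfamilies $F\subset\mathcal{V}$, which is an infinite set. For each $F\in I$ choose $x_F\in K\setminus\bigcup F$. For $F\in I$ put $S_F=\{G\in I \,|\, G\supseteq F\}$. We have $S_F\cap S_{F'}=S_{F\cup F'}$, and each $S_F$ is infinite because $\mathcal{V}$ is infinite. Hence the collection of all $S_F$ together with all cofinite subsets of $I$ has the finite intersection property. By Zorn's lemma it extends to an ultrafilter $\mathfrak{U}$, which satisfies 5) because it contains every cofinite set.

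By 2), $x=\lim_{\mathfrak{U}} x_F$ exists. Pick $U\in\mathcal{V}$ with $x\in U$. Then $\{F \,|\, x_F\in U\}\in\mathfrak{U}$, and also $S_{\{U\}}\in\mathfrak{U}$. However, for every $F\in S_{\{U\}}$ we have $U\subset\bigcup F$, so $x_F\notin U$. The two sets are therefore disjoint, and their intersection, which lies in $\mathfrak{U}$, is empty. This contradicts property 1), so $K$ is compact.

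The only delicate step is the construction of the free ultrafilter in the second implication. There one must verify that adjoining the cofinite sets preserves the finite intersection property, which is exactly where the infiniteness of each $S_F$ is used; the rest is routine.
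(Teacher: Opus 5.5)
Your proof is correct and complete in both directions. The paper does not prove this statement; it quotes it as a well-known fact and points to the ultraproduct literature, so there is no argument of the paper's to compare yours against. What you wrote is the standard open-cover argument. You are also right that the Hausdorff hypothesis only serves to make $\lim_{\mathfrak{U}} x_i$ unique and plays no role in the equivalence.

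Two minor remarks. First, the paper only ever uses the implication 1)$\Rightarrow$2), with $K$ a compact interval of $\mathbb{R}$ and $I=\mathbb{N}$, to get ultralimits of bounded real sequences; so only the first half of your argument is actually needed there. Second, in your construction of $\mathfrak{U}$ you do not need to adjoin the cofinite sets by hand. Any ultrafilter containing all the $S_F$ is automatically free. If a finite $A\subset I$ belonged to it, choose $V\in\mathcal{V}$ lying in none of the finitely many finite families $G\in A$. Then $S_{\{V\}}\cap A=\emptyset$, which is impossible.
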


\noindent Thus every bounded sequence of real numbers is convergent over any ultrafilter over $\mathbb{N}$. In addition, the limits under ultrafilters are algebraically manipulated in the same way as the usual limits, that is, they are compatible with order relations and vector operations.
 
\begin{theorem}\label{N is complete}
Let $X$ be a Banach space, $\mathcal{N}'(X)$ as in (\ref{quotient of collinear norms}) and $d$ as in (\ref{definition distance d}). Then $(\mathcal{N}'(X) , d)$ is a complete metric space.

\noindent Moreover, if $\mathfrak{U}$ is a nontrivial ultrafilter over $\mathbb{N}$, $\left(\overline{\| \cdot \|}_n\right)$ is a $d$-convergent sequence to $\overline{\| \cdot \|}$  then there exist representatives $\| \cdot \|_n \in \overline{\| \cdot \|}_n$ such that for each $x \in X$

\begin{equation}
\| x \| = \displaystyle{\lim_\mathfrak{U}} \, \| x \|_n.
\end{equation}

\noindent with $\| \cdot \| \in \overline{\| \cdot \|}$. That is, the $d$-limit is equals to a pointwise limit over $\mathfrak{U}$.
\end{theorem}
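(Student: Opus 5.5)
Fix a reference norm $\|\cdot\|_0$ on $X$ and take a $d$-Cauchy sequence $\left(\overline{\|\cdot\|}_n\right)$ in $\mathcal{N}'(X)$. The plan is to pick normalized representatives, form their pointwise limit along $\mathfrak{U}$, show that this limit is an equivalent norm, and then show the sequence converges to its class in $d$. Passing to a subsequence if necessary, I will assume $d(\overline{\|\cdot\|}_n,\overline{\|\cdot\|}_m)\le \varepsilon_N$ for $n,m\ge N$ with $\varepsilon_N\to 0$. By incise 2) of Lemma \ref{properties of rho} I may rescale representatives freely, so I choose $\|\cdot\|_n$ with
\[
l_n\|x\|_0\le \|x\|_n\le u_n\|x\|_0 ,\qquad l_n = 1,\quad u_n=\varrho(\|\cdot\|_n,\|\cdot\|_0).
\]
By the Cauchy property and incise 4) of Lemma \ref{properties of rho}, $u_n\le \varrho(\|\cdot\|_n,\|\cdot\|_1)\,u_1$. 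This quantity is bounded by some $R$, since $\log\varrho(\|\cdot\|_n,\|\cdot\|_1)=d(\overline{\|\cdot\|}_n,\overline{\|\cdot\|}_1)$ is bounded for a Cauchy sequence. Hence for each $x$ the real sequence $(\|x\|_n)$ lies in $[\|x\|_0,R\|x\|_0]$, and by Theorem \ref{filters convergence over compacts} the limit $\|x\|:=\lim_{\mathfrak{U}}\|x\|_n$ exists.

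Since ultrafilter limits respect order and algebraic operations, $\|\cdot\|$ is absolutely homogeneous and subadditive, and $\|x\|_0\le\|x\|\le R\|x\|_0$. Therefore $\|\cdot\|$ is a norm equivalent to $\|\cdot\|_0$, so $\|\cdot\|\in\mathcal{N}(X)$.

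For convergence, the key step is to turn the Cauchy estimate into uniform two-sided inequalities between fixed representatives. Fix $N$ and $n\ge N$. The sharp constants $s_{n},r_{n}$ with $s_{n}\|x\|_N\le\|x\|_n\le r_{n}\|x\|_N$ satisfy $r_{n}/s_{n}\le e^{\varepsilon_N}$. The difficulty is that the individual scalars $s_n,r_n$ depend on $n$ and are not controlled separately, because the normalization $l_n=1$ pins the scale only through $\|\cdot\|_0$. To handle this, I note that $s_n,r_n$ lie in a compact interval: from the bounds against $\|\cdot\|_0$ we get $R^{-1}\le s_n\le r_n\le R$. So I set $s=\lim_{\mathfrak{U}}s_n$ and $r=\lim_{\mathfrak{U}}r_n$, and taking $\mathfrak{U}$-limits in $n$ gives $s\|x\|_N\le\|x\|\le r\|x\|_N$ with $r/s\le e^{\varepsilon_N}$. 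By incise 3) of Lemma \ref{properties of rho}, $d(\overline{\|\cdot\|}_N,\overline{\|\cdot\|})\le\varepsilon_N\to0$. Because $\mathfrak{U}$ is nontrivial, the set $\{n\ge N\}$ belongs to $\mathfrak{U}$, which justifies restricting to $n\ge N$ inside the limit.

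I expect the main obstacle to be this scale-control step, together with the subsequence issue in the ``moreover'' part. A Cauchy sequence with a convergent subsequence converges, which gives completeness. For the second claim, the hypothesis already gives a $d$-convergent sequence, so I will run the same construction without passing to a subsequence, using $\varepsilon_N=\sup_{n,m\ge N}d(\overline{\|\cdot\|}_n,\overline{\|\cdot\|}_m)$. This yields representatives whose $\mathfrak{U}$-pointwise limit lies in the $d$-limit class, and that class is the given $\overline{\|\cdot\|}$ by uniqueness of limits in the metric space of Theorem \ref{d is a distance}.
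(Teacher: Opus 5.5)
Your proof is correct and follows essentially the same route as the paper. You normalize representatives against a fixed norm so that $(\|x\|_n)$ is bounded away from $0$ and $\infty$, define $\|x\|=\lim_{\mathfrak{U}}\|x\|_n$, and take $\mathfrak{U}$-limits of the uniformly bounded sharp comparison constants between $\|\cdot\|_n$ and $\|\cdot\|_N$ to get $d(\overline{\|\cdot\|}_N,\overline{\|\cdot\|})\le\varepsilon_N$. The differences are cosmetic: the paper uses the sequence member $\|\cdot\|_{N_1}$ as reference, normalizes from above, and proves convergence along a subsequence $(N_k)$ before invoking the Cauchy property, whereas your bound $R^{-1}\le s_n\le r_n\le R$ handles every $N$ directly, so the subsequence step you mention is not needed.
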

\begin{proof}
Let $\left(\overline{\| \cdot \|}_n\right)$ be a $d$-Cauchy sequence in $\mathcal{N}'(X)$, $(\varepsilon_n)$ be a sequence of positive real numbers such that $\varepsilon_n \downharpoonright 0$ and $(\varepsilon_n')$ be the sequence $\varepsilon_n' = e^{\varepsilon_n} - 1$. First we will construct the $d$-limit of the sequence. Given $\varepsilon_1 > 0$ there exists $N_1 \in \mathbb{N}$ such that for each $n , m \geq N_1$ we have that

\begin{equation}
d\left(\overline{\| \cdot \|}_m , \overline{\| \cdot \|}_n\right) < \varepsilon_1.
\end{equation}

\noindent We choose a representative $\| \cdot \|_{N_1} \in \overline{\| \cdot \|}_{N_1}$ and for each $n \geq N_1$ we choose the sharpest representative $\| \cdot \|_n \in \overline{\| \cdot \|}_n$ under $\| \cdot \|_{N_1}$. That is, for each $n \geq N_1$ there exists $u_n^1 \geq 1$ such that for each $x \in X$
 
\begin{equation}\label{technical inequality}
\| x \|_n \leq \| x \|_{N_1} \leq u_n^1 \| x \|_n
\end{equation} 
 
\noindent and

\begin{equation}\label{upper bound of u}
u_n^1 = \varrho (\| \cdot \|_n , \| \cdot \|_{N_1}) < e^{\varepsilon_1} = 1 + \varepsilon_1'.
\end{equation}

\noindent Note that

\begin{equation}\label{lower bound of u}
(u_n^1)^{-1} >  (1 + \varepsilon_1')^{-1} > 0.
\end{equation}

\noindent We affirm that for each $x \in X\setminus \{ 0 \}$ the sequence $(\| x \|_n)$ is far from $0$. In fact, we have that 

\begin{equation}\label{sequence away 0}
\begin{split}
\| x \|_n & \geq (u_n^1)^{-1} \| x \|_{N_1} \\
          & > (1 + \varepsilon_1')^{-1} \| x \|_{N_1}\\
          & > 0.
\end{split}
\end{equation}

\noindent From (\ref{technical inequality}) and (\ref{sequence away 0}) it follows that $(\| x \|_n)$ is bounded. Let $\mathfrak{U}$ be a nontrivial ultrafilter over $\mathbb{N}$. Thus for each $x \in X$ we define 

\begin{equation}
\| x \| = \displaystyle{\lim_\mathfrak{U}} \, \| x \|_n.
\end{equation} 

\noindent By (\ref{sequence away 0}) and the linearity of ultra limits, it is clear that $\| \cdot \|$ is a norm over $X$ and by (\ref{upper bound of u}) we have that

\begin{equation}
 1 \leq \displaystyle{\lim_\mathfrak{U}} \, u_n^1 \leq 1 + \varepsilon_1'.
\end{equation}

\noindent Then using (\ref{technical inequality})

\begin{equation}
\begin{split}
\| x \| & = \displaystyle{\lim_\mathfrak{U}} \, \| x \|_n \\
 & \leq  \| x \|_{N_1} \\
                                               &  \leq \displaystyle{\lim_\mathfrak{U}} \,( u_n^1 \, \| x \|_n ) \\
                                               &  =  \left (\displaystyle{\lim_\mathfrak{U}} \, u_n^1 \right)\, \left( \displaystyle{\lim_\mathfrak{U}} \, \| x \|_n \right) \\
                                               & \leq \left(1 + \varepsilon_1' \right) \, \| x \|
\end{split}
\end{equation}

\noindent Hence $\| \cdot \| \in \mathcal{N}(X)$ and by incise $3)$ of Lemma \ref{properties of rho} we have that 

\begin{equation}\label{distance for N1}
\varrho( \| \cdot \| , \| \cdot \|_{N_1} ) \leq 1 + \varepsilon_1'.
\end{equation}

Now we going to construct a subsequence of $\left(\overline{\| \cdot \|}_n\right)$ such that it $d$-converge to $\overline{\| \cdot \|}$. We will proceed inductively, with the base step the norm $\| \cdot \|_{N_1}$ which satisfies (\ref{distance for N1}). Let $k \in \mathbb{N}$ and we assume $\|\cdot \|_{N_s}$ chosen for $s <k$. Since $\varepsilon_k > 0$, then there exists $N_k > N_{k-1}$ such that for each $n , m  \geq N_k$ 

\begin{equation}
d \left(\overline{\| \cdot \|}_n , \overline{\| \cdot \|}_m\right) < \varepsilon_k
\end{equation} 

\noindent or equivalently 

\begin{equation}\label{technical distance rho k}
\varrho(\| \cdot \|_n , \| \cdot \|_m) < 1 + \varepsilon_k'.
\end{equation}

\noindent Thus, for each $n \geq N_k $ there exist sharp $u_n^k \geq l_n^k > 0$ such that for each $x \in X$

\begin{equation}\label{technical inequality 2}
l_n^k \| x \|_n \leq \| x \|_{N_k} \leq u_n^k \| x \|_n
\end{equation}

\noindent By (\ref{technical inequality}), (\ref{upper bound of u}) and (\ref{lower bound of u}), we have that for each $n \geq N_k$ and $x \in X$

\begin{equation}
\begin{split}
(1 + \varepsilon_1')^{-1} \| x \|_n & \leq (1 + \varepsilon_1')^{-1} \| x\|_{N_1} \\
                                       & \leq \| x \|_{N_k} \\
                                       & \leq \| x \|_{N_1} \\
                                       & \leq (1 + \varepsilon_1') \| x \|_n                                     
\end{split}
\end{equation}

\noindent So from the fact that $l_n^k$ and $u_n^k$ are sharp and incise $3)$ of Lemma \ref{properties of rho}, it follows that for each $n \geq N_k$

\begin{equation}
0 < (1 + \varepsilon_1')^{-1} \leq l_n^k \leq u_n^k \leq 1 + \varepsilon_1'.
\end{equation}

\noindent Then

\begin{equation}
0 < (1 + \varepsilon_1')^{-1} \leq \displaystyle{\lim_{n , \mathfrak{U}}} \, l_n^k \leq \displaystyle{\lim_{n , \mathfrak{U}}} \, u_n^k \leq 1 + \varepsilon_1'
\end{equation}

\noindent That is, the ultralimits exist. Hence by (\ref{technical inequality 2})

\begin{equation}
\begin{split}
\left( \displaystyle{\lim_{n , \mathfrak{U}}} \, l_n^k \right) \| x \| & = \displaystyle{\lim_{n , \mathfrak{U}}}  \left( l_n^k  \| x \|_n \right)  \\
         & \leq \| x \|_{N_k} \\
         & \leq  \displaystyle{\lim_{n , \mathfrak{U}}} \left( u_n^k \| x \|_n \right) \\
         & = \left( \displaystyle{\lim_{n , \mathfrak{U}}} \, u_n^k \right) \| x \|.
\end{split}
\end{equation}

\noindent From (\ref{technical distance rho k}) it follows that $\displaystyle{\frac{u_n^k}{l_n^k}} < 1 + \varepsilon_k'$ for each $n  \geq N_k$, therefore

\begin{equation}
\displaystyle{\frac{ \displaystyle{\lim_{n , \mathfrak{U}}} \, u_n^k }{\displaystyle{\lim_{n , \mathfrak{U}}} \, l_n^k}} \leq 1 + \varepsilon_k'
\end{equation}

\noindent In other words, 

\begin{equation}
d ( \| \cdot \|, \| \cdot \|_{N_k}) \leq \varepsilon_k
\end{equation}

\noindent This is how we have constructed the subsequence $\left(\overline{\| \cdot \|}_{N_k}\right)$ of $\left(\overline{\| \cdot \|}_n\right)$ that $d$-converges to $\overline{\| \cdot \|}$ and since $\left(\overline{\| \cdot \|}_n\right)$ is Cauchy we conclude that it converges to $\overline{\| \cdot \|}$. 
\end{proof}

\begin{corollary}\label{Corollary form of limit as U pointwise limit}
Let $X$ be a Banach space, $\varepsilon> 0$, $\mathfrak{U}$ be a nontrivial ultrafilter over $\mathbb{N}$, $\left(\overline{\| \cdot \|_n}\right)$ be a $d$-convergent sequence in $\mathcal{N}'(X)$ with $d$-limit $\overline{\| \cdot \|} \in \mathcal{N}'(X)$, $N \in \mathbb{N}$ and for each $n \geq N$ a representative $\| \cdot \|_n \in \overline{\| \cdot \|_n}$ and $u_n \geq 1$  such that in a sharp way for each $x \in X$

\begin{equation}
\| x \|_n \leq \| x\|_N \leq u_n \| x \|_n
\end{equation}

\noindent with $d(\| \cdot \|_n , \| \cdot \|_N) = \log u_n < \varepsilon$. Then the function defined for each $x \in X$ by

\begin{equation}
\| x \| = \displaystyle{\lim_\mathfrak{U}}\, \| x \|_n 
\end{equation} 

\noindent is a representative of the $d$-limit $\overline{\| \cdot \|}$.
\end{corollary}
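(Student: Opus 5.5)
The plan is to show first that the pointwise ultralimit is an equivalent norm, and then that its class is at distance zero from $\overline{\| \cdot \|}$.

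\emph{Step 1: $\| \cdot \|$ is an equivalent norm.} For $x \in X$ and $n \geq N$, the hypothesis gives
\[
\| x \|_N \, e^{-\varepsilon} < u_n^{-1}\| x \|_N \leq \| x \|_n \leq \| x \|_N .
\]
So $(\| x \|_n)_{n\geq N}$ is a bounded real sequence, and by Theorem \ref{filters convergence over compacts} its ultralimit exists. Since ultralimits are compatible with order and with the vector operations, the triangle inequality and homogeneity pass to the limit. We also get $e^{-\varepsilon}\| x \|_N \leq \| x \| \leq \| x \|_N$. Hence $\| \cdot \|$ is a norm equivalent to $\| \cdot \|_N$, and therefore $\| \cdot \| \in \mathcal{N}(X)$.

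\emph{Step 2: $\overline{\| \cdot \|}$ is the $d$-limit.} Fix $\delta > 0$. Since $\left(\overline{\| \cdot \|}_n\right)$ is $d$-convergent, it is $d$-Cauchy, so there is $M \geq N$ with $\varrho(\| \cdot \|_n , \| \cdot \|_m) < e^{\delta}$ for $n,m \geq M$. Fix $m \geq M$. For each $n \geq M$, take the sharp constants $l_n^m \leq u_n^m$ with
\[
l_n^m \| x \|_n \leq \| x \|_m \leq u_n^m \| x \|_n .
\]
These constants lie in a fixed compact interval $[e^{-2\varepsilon}, e^{2\varepsilon}]$, by comparison of both norms with $\| \cdot \|_N$ and item 3) of Lemma \ref{properties of rho}. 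So their ultralimits $l^m, u^m$ exist and are positive, and they satisfy $u^m/l^m \leq e^{\delta}$.

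Next we pass to the ultralimit in $n$. This is legitimate because the set $\{n \geq M\}$ belongs to $\mathfrak{U}$, the ultrafilter being nontrivial. We obtain
\[
l^m \| x \| \leq \| x \|_m \leq u^m \| x \| \quad \text{for all } x\in X .
\]
Item 3) of Lemma \ref{properties of rho} then gives $d\bigl(\overline{\| \cdot \|}, \overline{\| \cdot \|}_m\bigr) \leq \delta$ for every $m \geq M$. Thus $\overline{\| \cdot \|}_m \to \overline{\| \cdot \|}$ in $d$. By uniqueness of limits in the metric space of Theorem \ref{d is a distance}, this class coincides with the given $d$-limit, which proves the corollary.

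The only delicate point is in Step 2: the ultralimit must be taken only over indices $n \geq M$, which needs the cofinite sets to lie in $\mathfrak{U}$, and the sharp constants must be uniformly bounded so that their ultralimits exist. Both are handled above, and the argument then follows the inductive step in the proof of Theorem \ref{N is complete}, now with $m$ arbitrary instead of along the chosen subsequence $N_k$.
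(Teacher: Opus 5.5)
Your proposal is correct and follows essentially the paper's argument: the corollary is read off from the proof of Theorem \ref{N is complete}. That proof likewise takes the pointwise ultralimit of representatives sharp under a fixed norm, bounds the sharp comparison constants uniformly so their ultralimits exist, and passes to the ultralimit in the comparison inequalities. The only harmless difference is that you run the comparison for every $m \geq M$ and then invoke uniqueness of limits, whereas the paper does it along a subsequence $(N_k)$ and concludes using the Cauchy property.
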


\begin{lemma}\label{Lemma infinity norm as limit of infinity norms}
Let $X$ be a Banach space, $C$ a nonempty convex, closed and bounded subset of $X$, $\mathfrak{U}$ be a nontrivial ultrafilter over $\mathbb{N}$, $(\| \cdot \|_n)$ be a sequence in $\mathcal{N}(X)$ such that, there exists $\| x \| := \displaystyle{\lim_\mathfrak{U}} \, \| x \|_n$ for each $x \in X$. Then for each bounded $T:C \to X$ 

\begin{equation}
\| T \|_\infty = \displaystyle{\lim_\mathfrak{U}} \, \| T \|_{n , \infty}
\end{equation}

\noindent is fulfilled.
\end{lemma}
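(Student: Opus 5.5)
The plan is to compare, for a fixed bounded $T:C\to X$, the two quantities
\[
\| T \|_\infty=\sup_{c\in C}\|Tc\| \qquad\text{and}\qquad \| T \|_{n,\infty}=\sup_{c\in C}\|Tc\|_n ,
\]
and to prove the equality as two inequalities: one that is formal, and one that needs a uniformity argument. Throughout I use only that ultralimits respect order and algebraic operations, as recorded after Theorem \ref{filters convergence over compacts}.

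\emph{Lower bound.} Fix $c\in C$. For every $n$ we have $\|Tc\|_n\le \|T\|_{n,\infty}$, so taking ultralimits gives $\|Tc\|=\lim_\mathfrak{U}\|Tc\|_n\le \lim_\mathfrak{U}\|T\|_{n,\infty}$. This step presupposes that the last ultralimit exists in $[0,\infty)$, i.e.\ that $(\|T\|_{n,\infty})$ is bounded; if not, the ultralimit is $+\infty$ in the extended sense and the inequality is trivial. Taking the supremum over $c\in C$ then yields $\|T\|_\infty\le\lim_\mathfrak{U}\|T\|_{n,\infty}$. No uniformity is needed here.

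\emph{Upper bound: what I will try.} This is the main obstacle, since it asks to exchange $\lim_\mathfrak{U}$ with $\sup_{c\in C}$. I will use the structure the paper actually works with, namely the setting of Theorem \ref{N is complete} and Corollary \ref{Corollary form of limit as U pointwise limit}, where the norms come with sharp comparisons against a reference norm. Assume constants with $\lim_\mathfrak{U}$ finite, as in the proof of Theorem \ref{N is complete}, such that
\[
l_n\|x\|_n\le\|x\|\le u_n\|x\|_n \qquad\text{for all } x\in X .
\]
Then $\|Tc\|_n\le l_n^{-1}\|Tc\|\le l_n^{-1}\|T\|_\infty$ for every $c\in C$, so $\|T\|_{n,\infty}\le l_n^{-1}\|T\|_\infty$. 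Passing to the ultralimit gives
\[
\lim_\mathfrak{U}\|T\|_{n,\infty}\le \Bigl(\lim_\mathfrak{U} l_n\Bigr)^{-1}\|T\|_\infty .
\]
To obtain the sharp inequality, the idea is to show that $\lim_\mathfrak{U} l_n=1$, because the comparison constants become sharp in the limit (the argument producing $\lim_\mathfrak{U}u_n^k\le 1+\varepsilon_k'$ in Theorem \ref{N is complete}). Alternatively, one can pick $c_n\in C$ with $\|Tc_n\|_n>\|T\|_{n,\infty}-1/n$. Then uniform comparability lets one replace $\|Tc_n\|_n$ by $\|Tc_n\|$ up to a factor tending to $1$ along $\mathfrak{U}$, and this is bounded by $\|T\|_\infty$.

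The delicate point is precisely this uniform control. With pointwise $\mathfrak{U}$-convergence alone, the near-maximizers $c_n$ may drift, and the exchange of $\sup$ and $\lim_\mathfrak{U}$ is not automatic. So the proof has to rest on reading the hypothesis as in Corollary \ref{Corollary form of limit as U pointwise limit}, where the sharp bounds $\|x\|_n\le\|x\|_N\le u_n\|x\|_n$ are available. I expect this step to carry all the content. Combining it with the lower bound gives $\|T\|_\infty=\lim_\mathfrak{U}\|T\|_{n,\infty}$.
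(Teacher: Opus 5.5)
Your lower bound $\|T\|_\infty\le\lim_{\mathfrak U}\|T\|_{n,\infty}$ is correct, and it is the same argument as the second half of the paper's proof. The upper bound is never proved: you defer it to a hypothesis that is not in the statement. Your suspicion that pointwise $\mathfrak U$-convergence cannot give it is right, because the statement is false as written. Take $X=\ell_2$, $C$ its closed unit ball, $T$ the identity on $C$, and $\|x\|_n=\|x\|_2+|\langle x,e_n\rangle|$. These norms satisfy $\|x\|_2\le\|x\|_n\le 2\|x\|_2$ and converge pointwise to $\|x\|_2$. So the ultralimit norm is $\|\cdot\|_2$ and $\|T\|_\infty=1$, yet $\|T\|_{n,\infty}=2$ for every $n$ (attained at $e_n$). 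This is exactly the drift of near-maximizers you describe. It is also where the paper's own argument breaks: it picks $x_n\in C$ with $a-\varepsilon<\|Tx_n\|_n$ and then writes $\lim_{\mathfrak U}\|Tx_n\|_n=\|Tx_n\|$, treating a point that depends on $n$ as if it were fixed.

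Your reduction to $\lim_{\mathfrak U}l_n=1$ is the right target, but the sources you cite do not deliver it. Uniform comparability with a fixed constant is not enough. The norms $\|x\|_2+\varepsilon|\langle x,e_n\rangle|$ satisfy $\|x\|_2\le\|x\|_n\le(1+\varepsilon)\|x\|_2$ for all $n$, yet $\|T\|_{n,\infty}=1+\varepsilon$. Likewise, the bounds of Corollary~\ref{Corollary form of limit as U pointwise limit}, $\|x\|_n\le\|x\|_N\le u_n\|x\|_n$ with $\log u_n<\varepsilon$, only yield $\lim_{\mathfrak U}\|T\|_{n,\infty}\le e^{\varepsilon}\|T\|_\infty$. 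The same holds for a single estimate $\lim_{\mathfrak U}u_n^k\le 1+\varepsilon_k'$ from Theorem~\ref{N is complete}. Letting $k\to\infty$ there already uses the Cauchy property, which is not a hypothesis of the lemma.

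What actually forces $\lim_{\mathfrak U}l_n=1$ is $d$-convergence of $(\overline{\|\cdot\|_n})$ to the class of the ultralimit norm. Suppose $l_n\|x\|_n\le\|x\|\le u_n\|x\|_n$ sharply and $u_n/l_n\to 1$ along $\mathfrak U$. Evaluating at one fixed $x_0\neq 0$ gives $\lim_{\mathfrak U}l_n\le 1\le\lim_{\mathfrak U}u_n=\lim_{\mathfrak U}l_n$, hence $\lim_{\mathfrak U}l_n=1$. Your estimate $\|T\|_{n,\infty}\le l_n^{-1}\|T\|_\infty$ then finishes the proof. This hypothesis does hold where the lemma is used in the last section: there $\|\cdot\|_n=c_n\|\cdot\|'_n$, the sequence $(\overline{\|\cdot\|'_n})$ is $d$-Cauchy, and its $d$-limit is represented by the pointwise ultralimit. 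So the lemma should be restated with that assumption, and your proof written under it.
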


\begin{proof}
Let $\varepsilon > 0$, $T \in BLip(C,X)$ and we call 

\begin{equation}
a= \displaystyle{\lim_\mathfrak{U}} \, \| T \|_{n, \infty} =\displaystyle{\lim_{n , \mathfrak{U}} \, \sup_{x\in C}} \, \|Tx \|_n.
\end{equation}

\noindent Then there exists $I \in \mathfrak{U}$ such that for each $n \in I$ we have that

\begin{equation}
a - \varepsilon < \| T \|_{n , \infty} = \displaystyle{\sup_{x \in C}} \, \| T x \|_n.
\end{equation}

\noindent Thus for every $n \in I$ there exist $x_n \in C$ such that

\begin{equation}
a-\varepsilon < \| Tx_n \|_n.
\end{equation}

\noindent Hence

\begin{equation}
\begin{split}
a- \varepsilon & \leq \displaystyle{\lim_{n \in I , \mathfrak{U}}} \, \| Tx_n \|_n \\
               & = \displaystyle{\lim_{n, \mathfrak{U}}} \, \| Tx_n \|_n \\
               & = \| Tx_n \| \\
               & \leq \displaystyle{\sup_{x \in C}} \| Tx \|.
\end{split} 
\end{equation}

\noindent Then

\begin{equation}
\displaystyle{\lim_\mathfrak{U} }\, \| T \|_{n, \infty} \leq  \| T\|_\infty.
\end{equation}

On the other hand. For each $x \in C$ we have that 

\begin{equation}
\| Tx \|_n \leq \displaystyle{\sup_{x \in C}} \, \| Tx \|_n
\end{equation}

\noindent Then for each $x \in X$ 

\begin{equation}
\displaystyle{\lim_\mathfrak{U}  } \, \| T x \|_n \leq \displaystyle{\lim_\mathfrak{U} \, \sup_{x \in C}} \, \| Tx \|_n
\end{equation}

\noindent Finally, since the right side of the last inequality does not depend on $x \in C$, then

\begin{equation}
\begin{split}
\|T \|_\infty & = \displaystyle{\sup_{x \in C} \, \lim_\mathfrak{U}} \, \|Tx \|_n \\
              & \leq \displaystyle{\lim_\mathfrak{U} \, \sup_{x \in C}} \, \| T x \|_n \\
              & = \displaystyle{\lim_\mathfrak{U} } \, \| T \|_{n , \infty}.
\end{split}
\end{equation}
\end{proof}

\section{Constructible norms for $BLip(C, X)$}

In this section we will introduce the family of constructible norms for a collection of bounded lipschitzian mappings on a Banach space and some known results about them. 

Let $(X , \| \cdot \|)$ be a Banach space and $C$ be a nonempty subset of $X$. Then we call

\begin{equation}
BLip(C,X) = \{ T: C \to X \, | \, K(T, \| \cdot \| ) , \| T \|_\infty < \infty \}
\end{equation}

\noindent the family of bounded lipschitzian mappings from $C$ to $X$. It is not hard to check that $BLip(C,X)$ is independent of the choice of a base norm as long as the norm is equivalent. It is a well known result that $BLip(C,X)$ is a Banach space if we endowed it with the norm 

\begin{equation}
\| T \|_b = \| T \|_\infty + K(T , \| \cdot \|),
\end{equation}

\noindent see for example the classic \cite{weaver2018lipschitz} or \cite{acosta2021intersection}. We will be considering $\mathcal{N}(BLip(C,X))$ as the family of equivalent norms to $ \| \cdot \|_b$. The following definitions can be found in \cite{acosta2025constructible} and it is a family of norms that generalize to norm $\| \cdot \|_b$. If $\| \cdot \| \in \mathcal{N}(X) $ and $\theta \in \mathcal{N}(\mathbb{R}^2)$ we define for each $T \in BLip(C,X)$

\begin{equation}
\theta \| T \| := \theta (\| T \|_\infty , K (T , \| \cdot \|)),
\end{equation}

\noindent we say that $\theta \| \cdot \|$ is a constructible norm for $BLip(C,X)$ and it is constructed by $\| \cdot \|$ and $\theta$. We denote the family of constructible norms for $BLip(C,X)$ by 

\begin{equation}
\mathcal{N}(\mathbb{R}^2) \mathcal{N}(X) \subset \mathcal{N}(BLip(C,X))
\end{equation}

\noindent and the family of constructible norms modulus the collinearity relationship by

\begin{equation}
(\mathcal{N}(\mathbb{R}^2) \mathcal{N}(X) )'\subset \mathcal{N}'(BLip(C,X)).
\end{equation}

\noindent Note that constructible norms $\rho \| \cdot \|$ only take into account positive arguments when evaluating the norm $\rho(\, \cdot  ,  \cdot ) $ of $\mathbb{R}^2$, because $\| \cdot \|_\infty$ and $K (\, \cdot \, , \| \cdot \|)$ are non negative. Therefore, from now on we will only consider the norms $\rho(\, \cdot , \cdot )$ defined in the first quadrant of $\mathbb{R}^2$ unless otherwise mentioned. We will say that two operators $S, T \in BLip(C , X)$ are $\| \cdot \|$-indistinguishable if $\| S \|_\infty = \| T \|_\infty$ and $K(S , \| \cdot \|) = K(T , \| \cdot \|)$. To each norm $\eta(\cdot ) \in \mathcal{N}(BLip(C , X))$ we can associate a norm over $X$ defined by

\begin{equation}\label{definition of projection of BLip norms to X norms}
\phi (\eta(x)) = \eta (f_x)
\end{equation}

\noindent for each $x \in X$ where $f_x:C \to X$ is the constant function $x$. We will cal $\phi$ the projection of $\mathcal{N}(BLip(C,X))$ in $\mathcal{N}(X)$. Since $\phi$ is an evaluation, then $\phi (c \rho(\cdot ))(x) = c \rho(f_x)$, thus $\phi$ is well defined in the quotient space of collinear norms, that is, $\phi : \mathcal{N}'(BLip(C,X)) \to \mathcal{N}'(X)$. It follows direct from definition that 

\begin{lemma}\label{Lemma phi is nonexpansive}
Let $X$ be a Banach space, $C$ a nonempty convex, closed and bounded subset of $X$ and $\phi$ defined as in (\ref{definition of projection of BLip norms to X norms}). Then 

\begin{equation}
\phi: \mathcal{N}'(BLip(C,X)) \to \mathcal{N}'(X)
\end{equation}

\noindent is $d$ to $d$ nonexpansive.
\end{lemma}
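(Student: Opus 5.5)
The plan is to work directly from the definition of $\varrho$ and incise $3)$ of Lemma \ref{properties of rho}. First I check that $\phi$ maps $\mathcal{N}(BLip(C,X))$ into $\mathcal{N}(X)$. Given $\eta \in \mathcal{N}(BLip(C,X))$, the map $x \mapsto f_x$ is linear and injective, so $\phi(\eta)$ is a norm on $X$. For a constant function, $K(f_x , \| \cdot \|) = 0$ and $\| f_x \|_\infty = \| x \|$, hence $\| f_x \|_b = \| x \|$. Since $\eta$ is equivalent to $\| \cdot \|_b$, there are $a , b > 0$ with $a \| T \|_b \leq \eta(T) \leq b \| T \|_b$ for all $T$. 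Taking $T = f_x$ gives $a \| x \| \leq \phi(\eta)(x) \leq b \| x \|$, so $\phi(\eta) \in \mathcal{N}(X)$. Well-definedness on the quotient was already noted before the statement: $\phi(c \eta) = c \, \phi(\eta)$, which together with incise $2)$ of Lemma \ref{properties of rho} makes $\phi$ independent of representatives.

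Next comes the nonexpansiveness. Take $\eta_1 , \eta_2 \in \mathcal{N}(BLip(C,X))$ and let $u \geq l > 0$ be the sharp constants with $l \, \eta_2(T) \leq \eta_1(T) \leq u \, \eta_2(T)$ for every $T \in BLip(C,X)$, so that $\varrho(\eta_1 , \eta_2) = u/l$. Restricting these inequalities to the subspace of constant functions $T = f_x$ gives
\begin{equation*}
l \, \phi(\eta_2)(x) \leq \phi(\eta_1)(x) \leq u \, \phi(\eta_2)(x)
\end{equation*}
for every $x \in X$. Incise $3)$ of Lemma \ref{properties of rho} then gives $\varrho(\phi(\eta_1) , \phi(\eta_2)) \leq u/l = \varrho(\eta_1 , \eta_2)$. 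Since $\log$ is increasing, this yields $d(\phi(\overline{\eta_1}) , \phi(\overline{\eta_2})) \leq d(\overline{\eta_1} , \overline{\eta_2})$.

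No step here is a real obstacle. The only point needing care is the well-definedness and membership check in the first step, specifically the identity $\| f_x \|_b = \| x \|$. That identity relies on $C$ being nonempty, so that the supremum defining $\| f_x \|_\infty$ is attained and equals $\| x \|$. The rest is the observation that the sharp constants on a subspace can only improve, since restricting a two-sided inequality to fewer elements keeps it valid.
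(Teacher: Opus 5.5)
Your proposal is correct and takes essentially the same approach as the paper. The paper writes out no proof and only says the lemma follows directly from the definition; restricting the two-sided inequality between $\eta_1$ and $\eta_2$ to the constant maps $f_x$ and then applying incise $3)$ of Lemma \ref{properties of rho} is exactly that argument, and the paper uses the same move in the proof of Theorem \ref{Theorem same distance in components}.
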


If $x , y \in X$ then we call $[x , y] = \{\lambda x + (1 - \lambda)y \, | \, 0 \leq \lambda \leq 1 \}$ the convex hull of $x$ and $y$. In \cite{acosta2025constructible} was prove the following five results that we will use in this work:

\begin{lemma}\label{retract lemma}
Let $(X, \| \cdot \|)$ be a normed space, $x , y \in X$ and $Z \subset Y$, $N \supset [x, y]$ be metric spaces then:
\begin{itemize}
\item[(1)] Every bounded Lipschitzian function $f:Z \to [x , y]$ can be extended to a bounded Lipschitzian function $F: Y \to [x, y]$ with the same $\| \cdot \|$-Lipschitz constant and $\| \cdot \|$-infinity norm.

\item[(2)] Every bounded Lipschitzian function $f:[x, y] \to Z$ can be extended to a bounded Lipschitzian function $F: N \to Z$ with the same $\| \cdot \|$-Lipschitz constant and $\| \cdot \|$-infinity norm.
\end{itemize}
\end{lemma}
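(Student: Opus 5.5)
The key observation for both parts is that the segment $[x,y]$ is isometric to a compact real interval, so we can reduce to real-valued Lipschitz extension (McShane) and then truncate. If $x=y$ everything is constant and trivial, so assume $L=\|x-y\|>0$. The map $\gamma:[0,L]\to[x,y]$, $\gamma(t)=y+\frac{t}{L}(x-y)$, is an isometry onto $[x,y]$, since $\|\gamma(t)-\gamma(s)\|=|t-s|$. Its inverse $\gamma^{-1}:[x,y]\to[0,L]$ is also an isometry. For any real interval $[\alpha,\beta]$ the truncation $\tau_{[\alpha,\beta]}(t)=\min\{\beta,\max\{\alpha,t\}\}$ is a $1$-Lipschitz retraction of $\mathbb{R}$ onto $[\alpha,\beta]$. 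Throughout, I use that an extension can never have a smaller Lipschitz constant or a smaller supremum than the original map, so only the inequalities ``$\leq$'' need proof.

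For (1), let $g=\gamma^{-1}\circ f:Z\to[0,L]$. This map has the same Lipschitz constant $K$ as $f$. Let $\alpha=\inf g(Z)$ and $\beta=\sup g(Z)$. By McShane, $\tilde g(p)=\inf_{z\in Z}\{g(z)+K\,d(p,z)\}$ is a $K$-Lipschitz extension of $g$ to $Y$. I then set $G=\tau_{[\alpha,\beta]}\circ\tilde g$, which is still $K$-Lipschitz, still extends $g$, and takes values in $[\alpha,\beta]\subset[0,L]$. Define $F=\gamma\circ G:Y\to[x,y]$; then $K(F,\|\cdot\|)=K(f,\|\cdot\|)$.

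The point needing care in (1) is the $\|\cdot\|$-infinity norm, and this is exactly why I truncate to $[\alpha,\beta]$ rather than to $[0,L]$. The values of $F$ lie in $\gamma([\alpha,\beta])$. The function $t\mapsto\|\gamma(t)\|$ is convex, so its supremum over $[\alpha,\beta]$ is $\max\{\|\gamma(\alpha)\|,\|\gamma(\beta)\|\}$. Since $\alpha$ and $\beta$ are limits of values of $g$, and $\|\gamma(\cdot)\|$ is continuous, both $\|\gamma(\alpha)\|$ and $\|\gamma(\beta)\|$ are bounded by $\sup_{z\in Z}\|f(z)\|$. Hence $\|F\|_\infty\le\|f\|_\infty$, and equality holds because $F$ extends $f$.

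For (2), I construct a $1$-Lipschitz retraction $r:N\to[x,y]$, reading $N$ as a subset of $X$ with the metric induced by $\|\cdot\|$. The map $\gamma^{-1}:[x,y]\to[0,L]$ is $1$-Lipschitz, so by McShane it extends to a $1$-Lipschitz $h:N\to\mathbb{R}$. Then $r=\gamma\circ\tau_{[0,L]}\circ h$ is $1$-Lipschitz, maps into $[x,y]$, and fixes $[x,y]$ pointwise. Set $F=f\circ r:N\to Z$. Then $F$ extends $f$, and $K(F)\le K(f)\cdot 1$. Also $F(N)=f(r(N))=f([x,y])$, so $F$ has exactly the same range as $f$ and therefore the same infinity norm.

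I expect the only real obstacle to be the one in (1): preventing the extension from enlarging the supremum. The nearest-point projection onto a segment is not $1$-Lipschitz in a general normed space, which is why I avoid projections entirely and combine the isometric parametrization with real-line truncation to the convex hull of the range.
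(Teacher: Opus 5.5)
Your proof is correct. This paper gives no proof of the lemma to compare against: it is one of the five results quoted without proof from \cite{acosta2025constructible}. Your argument is therefore a self-contained justification, and it handles the one delicate point properly.

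In (1), you truncate the McShane extension to $[\alpha,\beta]=[\inf g(Z),\sup g(Z)]$ rather than to $[0,\|x-y\|]$, and this is genuinely necessary. Take $X=\mathbb{R}$, $x=100$, $y=0$, $Z=\{0,1\}\subset Y=[0,50]$ and $f=\mathrm{id}_Z$. Then the McShane extension is $\tilde g(p)=p$, and truncating to $[0,100]$ would give supremum $50$ instead of $\|f\|_\infty=1$. Your use of the convexity of $t\mapsto\|\gamma(t)\|$, together with $\alpha,\beta\in\overline{g(Z)}$, then correctly gives $\|F\|_\infty\le\|f\|_\infty$.

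In (2), the $1$-Lipschitz retraction $r:N\to[x,y]$ gives $F(N)=f([x,y])$. The range is preserved exactly, so the supremum is preserved under any reading of ``infinity norm'' on $Z$.

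Two minor remarks:
\begin{itemize}
\item The lower truncation in (1) is automatic, because the inf-convolution already satisfies $\tilde g\ge\alpha$.
\item In (2) you need not read $N$ as a subset of $X$. McShane's extension works in any metric space, so your retraction exists whenever the metric of $N$ restricts to the norm distance on $[x,y]$, which is the generality in which the statement is phrased.
\end{itemize}
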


\begin{lemma}\label{bound2}
Let $(X, \| \cdot \|)$ be a normed space, $C$ a subset of $X$ with at least 
two elements, $\varepsilon \geq 0$ and 

\begin{equation*}
s = \sup \left\{\frac{2 \varepsilon}{\|x-y \|}: x, y \in C, x \neq y \right\}.
\end{equation*}

\noindent Then for each $ 0 \leq r <   s$ there exists $T \in BLip(C , X)$ such that 
$\| T\|_\infty = \varepsilon$ and $K(T, \| \cdot \|) = r$.

\noindent Additionally, if $s < \infty$. Then the previous statement is valid for each 
$0 \leq r \leq s$.
\end{lemma}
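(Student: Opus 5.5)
We need to prove Lemma \ref{bound2}: given $\varepsilon \geq 0$ and $0 \le r < s$ (or $r \le s$ when $s<\infty$), we must find $T \in BLip(C,X)$ with $\|T\|_\infty = \varepsilon$ and $K(T,\|\cdot\|)=r$.

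\textbf{Trivial cases.} If $\varepsilon = 0$, then $s=0$, so only the case $s<\infty$ with $r=0$ arises, and $T\equiv 0$ works. So assume $\varepsilon>0$. If $r=0$, fix a unit vector $e\in X$ and take the constant map $T\equiv \varepsilon e$.

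\textbf{Construction for $r>0$.} The idea is to find a map that is small in sup norm but has exactly the prescribed Lipschitz constant.

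1. First choose points $x_0\ne y_0$ in $C$ with $2\varepsilon/\|x_0-y_0\| \ge r$, which says $r\|x_0-y_0\|\le 2\varepsilon$.
   - When $r<s$, such points exist by the definition of the supremum.
   - When $r=s<\infty$, the supremum need not be attained. If it is attained, fix such a pair. If not, run the construction below along a sequence of pairs and handle the limit; this is the delicate point discussed at the end.

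2. Fix a unit vector $e$. Define the real-valued function
\[
g(z)=\max\Bigl(-\varepsilon,\ \min\bigl(\varepsilon,\ r\|z-x_0\|-\varepsilon\bigr)\Bigr).
\]
   This is a truncation of the $r$-Lipschitz function $z\mapsto r\|z-x_0\|-\varepsilon$, so $g$ is $r$-Lipschitz and $|g|\le\varepsilon$.

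3. Set $T(z)=g(z)e$ on $C$. Then $T$ is $r$-Lipschitz and $\|T\|_\infty\le\varepsilon$. Moreover $g(x_0)=-\varepsilon$, so $\|T\|_\infty=\varepsilon$.

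4. It remains to show the Lipschitz constant is exactly $r$. Since $r\|x_0-y_0\|\le 2\varepsilon$, we have $r\|y_0-x_0\|-\varepsilon\le\varepsilon$, so the upper truncation is inactive at $y_0$. Hence
\[
g(y_0)-g(x_0)=r\|y_0-x_0\|,
\]
   which gives the ratio exactly $r$ for the pair $(x_0,y_0)$. Therefore $K(T,\|\cdot\|)=r$.

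\textbf{Alternative route via Lemma \ref{retract lemma}.} One can instead define $T$ on the two-point set $\{x_0,y_0\}$ with values $\pm$ suitable points of a segment $[-\varepsilon e,\varepsilon e]$. Extending by part (1) of Lemma \ref{retract lemma} preserves both the Lipschitz constant and the infinity norm. To make the infinity norm exactly $\varepsilon$, include a third point in the domain mapped to $\varepsilon e$, provided this does not raise the Lipschitz constant. The explicit truncation formula above already avoids this issue.

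\textbf{Main obstacle: the endpoint $r=s$ when the supremum is not attained.} In that case every pair satisfies $\|x-y\|>2\varepsilon/s$, so $r\|x-y\|>2\varepsilon$ for all $x\ne y$ in $C$. Then no pair of points can realize a difference of exactly $r\|x-y\|$ in a target of diameter $2\varepsilon$, so the construction of step 1 has no valid pair.

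To handle this, take pairs $(x_k,y_k)$ with $\|x_k-y_k\|\downarrow 2\varepsilon/s$. Use a map that sends $x_k$ near $-\varepsilon e$ and $y_k$ near $\varepsilon e$, with ratios tending to $s$, so that the supremum defining $K$ equals $s$ without being attained. The natural candidate is the same $T$ with $x_0$ replaced by $x_k$: each $T_k$ is $s$-Lipschitz with ratio at $(x_k,y_k)$ near $s$. The difficulty is that this yields a sequence of maps rather than a single one.

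So the key step is to build one $T$ from these pairs, for instance a nearest-point-type construction, that achieves $\sup$ ratio $=s$. This is where I expect real work, and possibly a need to reinterpret the "additionally" clause, for example reading it as covering only the case where the supremum is attained.
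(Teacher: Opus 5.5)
There is a genuine gap: the endpoint case $r=s<\infty$ when $\delta:=\inf\{\|x-y\| : x,y\in C,\ x\neq y\}$ is not attained. You leave it open and even suggest weakening the statement. That is not warranted, because the clause is true as stated.

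The rest is fine. Your truncation map $T=g\,e$ is correct whenever some pair $x_0\neq y_0$ in $C$ satisfies $r\|x_0-y_0\|\le 2\varepsilon$. That covers every $0\le r<s$, hence the whole case $s=\infty$, which is the only case the paper actually uses (via Lemma~\ref{values of infinite and Lipschitz}). It also covers $r=s<\infty$ when $\delta$ is attained. The paper gives no proof of its own here; it quotes the lemma from earlier work.

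The missing idea for the open case is as follows. For $\varepsilon>0$ and $s<\infty$ you have $\delta=2\varepsilon/s>0$, so $C$ is $\delta$-separated. Since $\delta$ is not attained, every map $T\colon C\to X$ with $\|T\|_\infty\le\varepsilon$ automatically satisfies $\|Tx-Ty\|/\|x-y\|\le 2\varepsilon/\|x-y\|<s$ for every pair. So you need no Lipschitz control at all, only a single map with $\|Tx-Ty\|=2\varepsilon$ along a sequence of pairs whose distances tend to $\delta$.

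To build it, choose pairs $\{x_k,y_k\}$ with $\|x_k-y_k\|$ strictly decreasing to $\delta$, so they are distinct. Then one of two things happens.

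\begin{itemize}
\item[(a)] Some point $p$ lies in infinitely many of the pairs. Set $T(p)=-\varepsilon e$, $T=\varepsilon e$ on the partners of $p$, and $T=0$ elsewhere.
\item[(b)] Every point lies in only finitely many pairs. Then a greedy extraction gives pairwise disjoint pairs $\{x_{k_j},y_{k_j}\}$. Set $T(x_{k_j})=\varepsilon e$, $T(y_{k_j})=-\varepsilon e$, and $T=0$ elsewhere.
\end{itemize}

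In both cases $\|T\|_\infty=\varepsilon$ and $K(T,\|\cdot\|)=\sup_j 2\varepsilon/\|x_{k_j}-y_{k_j}\|=s$, with the supremum not attained, as it must be.

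Some selection of this kind is unavoidable. Naively assigning $\pm\varepsilon e$ to the endpoints of all near-minimal pairs can be inconsistent, because such pairs may form odd cycles, for example triangles with all sides slightly above $\delta$. No ``nearest-point'' construction is needed.
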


\begin{theorem}\label{EquivSeparation}
Let $X$ be a normed space and $\| \cdot \|_1 , \| \cdot \|_2 \in \mathcal{N}(X)$ then the following statements are equivalent:
\begin{itemize}
 \item[(1)] $\|\cdot \|_1 \sim_c \| \cdot \|_2$.
 \item[(2)] There exist $\rho_1 , \rho_2 \in \mathcal{N}(\mathbb{R}^2)$ such that $\rho_1 \| \cdot \|_1 = \rho_2 \| \cdot \|_2$.
 \item[(3)] For each $\rho_1 \in \mathcal{N}(\mathbb{R}^2)$ exists $\rho_2 \in \mathcal{N}(\mathbb{R}^2)$ such that $\rho_2 \| \cdot \|_2 =\rho_1 \| \cdot \|_1$.
\end{itemize}
\end{theorem}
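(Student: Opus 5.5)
The plan is to prove the cycle of implications $(1)\Rightarrow(3)\Rightarrow(2)\Rightarrow(1)$. For $(2)\Rightarrow(1)$ I will test the equality $\rho_1\|\cdot\|_1=\rho_2\|\cdot\|_2$ on constant maps, which should reduce everything to the norms on $X$. Throughout, $C$ is the fixed nonempty domain of $BLip(C,X)$. The case $X=\{0\}$ is trivial, so assume $X\neq\{0\}$.

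\emph{$(1)\Rightarrow(3)$.} Suppose $\|x\|_2=c\|x\|_1$ for all $x\in X$, with $c>0$. Then for every $T\in BLip(C,X)$ we have $\|T\|_{2,\infty}=c\,\|T\|_{1,\infty}$ directly from the definition of the supremum norm. By incise 1) of Lemma \ref{lemma propertis of rho with K}, $K(T,\|\cdot\|_2)=K(T,\|\cdot\|_1)$; the factor $c$ cancels in the quotient defining $K$, and this holds for any domain $C$. Given $\rho_1\in\mathcal{N}(\mathbb{R}^2)$, define
\begin{equation*}
\rho_2(a,b)=\rho_1(c^{-1}a,\,b).
\end{equation*}
This is $\rho_1$ composed with an invertible linear map of $\mathbb{R}^2$, so it is again a norm on $\mathbb{R}^2$. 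Then
\begin{equation*}
\rho_2\|T\|=\rho_2\bigl(\|T\|_{2,\infty},K(T,\|\cdot\|_2)\bigr)=\rho_1\bigl(\|T\|_{1,\infty},K(T,\|\cdot\|_1)\bigr)=\rho_1\|T\|
\end{equation*}
for all $T$, which is (3).

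\emph{$(3)\Rightarrow(2)$.} This is immediate, since $\mathcal{N}(\mathbb{R}^2)$ is nonempty: take $\rho_1$ to be the Euclidean norm and let $\rho_2$ be the norm provided by (3).

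\emph{$(2)\Rightarrow(1)$.} This is the only step with content, and I expect it to be short. Fix $x\in X$ and consider the constant map $f_x:C\to X$, which lies in $BLip(C,X)$. Its Lipschitz constant is $0$ for every norm, and $\|f_x\|_{i,\infty}=\|x\|_i$. Evaluating the hypothesis $\rho_1\|\cdot\|_1=\rho_2\|\cdot\|_2$ at $f_x$ gives
\begin{equation*}
\rho_1(\|x\|_1,0)=\rho_2(\|x\|_2,0).
\end{equation*}
By positive homogeneity of $\rho_1$ and $\rho_2$ this reads $\|x\|_1\,\rho_1(1,0)=\|x\|_2\,\rho_2(1,0)$. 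Both $\rho_1(1,0)$ and $\rho_2(1,0)$ are strictly positive because $\rho_1,\rho_2$ are norms. Hence $\|x\|_2=c\|x\|_1$ with $c=\rho_1(1,0)/\rho_2(1,0)>0$, independent of $x$, which is (1).

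The only point needing care is that the constant maps are genuinely available in $BLip(C,X)$ and have vanishing Lipschitz constant, so that the second coordinate drops out. This is exactly where $C\neq\emptyset$ is used. No use of Lemmas \ref{retract lemma} or \ref{bound2} should be needed.
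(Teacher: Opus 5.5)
Your proof is correct. The paper does not prove this theorem itself (it is quoted from \cite{acosta2025constructible}), but your key step for $(2)\Rightarrow(1)$, evaluating on constant maps so that $\rho\|f_x\|=\rho(1,0)\|x\|$, is the mechanism the paper uses in Lemma~\ref{TecCol}(1) and in the proof of Theorem~\ref{Theorem same distance in components}, and your choice $\rho_2(a,b)=\rho_1(c^{-1}a,b)$ for $(1)\Rightarrow(3)$ is the reparametrization that appears in Lemma~\ref{Lemma form of constructible norms}.
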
 

\begin{lemma}\label{TecCol}
Let $X$ be a normed space, $C$ a nonempty subset of $X$, $T \in BLip(C,X)$, $\rho \in \mathcal{N}(\mathbb{R}^2) $ and $\| \cdot \| \in \mathcal{N}(X)$, Then:

\begin{itemize}
\item[(1)] $\phi ( \rho \| \cdot \|) \sim_c \| \cdot \|$.
\item[(2)] $\phi (\rho \| T \| )_\infty = \rho(1, 0)\| T \|_\infty$.
\item[(3)] $K(T , \phi(\rho \| \cdot \|)) = K(T, \| \cdot \|)$. 
\end{itemize}
\end{lemma}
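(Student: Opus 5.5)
The whole lemma reduces to one computation: evaluating the constructible norm $\rho\| \cdot \|$ on a constant map. By (\ref{definition of projection of BLip norms to X norms}), for $x \in X$ we have
\begin{equation*}
\phi(\rho\| \cdot \|)(x) = \rho\| f_x \| = \rho\bigl(\| f_x \|_\infty , K(f_x , \| \cdot \|)\bigr).
\end{equation*}
Since $f_x$ is constant with value $x$ on the nonempty set $C$, we get $\| f_x \|_\infty = \sup_{c \in C} \| x \| = \| x \|$. Also $f_x(c) - f_x(c') = 0$ for all $c, c' \in C$, so $K(f_x , \| \cdot \|) = 0$. If $C$ is a singleton, the supremum defining $K$ is over the empty set, and I would adopt the convention that it equals $0$. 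Hence $\phi(\rho\| \cdot \|)(x) = \rho(\| x \| , 0)$. By absolute homogeneity of $\rho$ as a norm on $\mathbb{R}^2$, this equals $\| x \| \, \rho(1,0)$. The constant $c := \rho(1,0)$ is strictly positive because $(1,0) \neq 0$ and $\rho$ is a norm. This gives item (1): $\phi(\rho\| \cdot \|) = c \| \cdot \|$, so $\phi(\rho\| \cdot \|) \sim_c \| \cdot \|$.

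Item (2) then follows by substituting this identity into the supremum norm computed with respect to $\phi(\rho\| \cdot \|)$:
\begin{equation*}
\phi(\rho\| T \|)_\infty = \sup_{y \in C} c \| Ty \| = c \sup_{y \in C} \| Ty \| = \rho(1,0) \| T \|_\infty ,
\end{equation*}
where I use that multiplication by the positive constant $c$ commutes with the supremum.

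For item (3), I would not invoke incise 1) of Lemma \ref{lemma propertis of rho with K}. That lemma is stated for self-maps of a closed bounded convex set, while here $T : C \to X$ and $C$ is arbitrary. Instead I would compute directly: for $y \neq y'$ in $C$,
\begin{equation*}
\frac{c \| Ty - Ty' \|}{c \| y - y' \|} = \frac{\| Ty - Ty' \|}{\| y - y' \|} ,
\end{equation*}
so the two suprema defining $K(T , \phi(\rho\| \cdot \|))$ and $K(T , \| \cdot \|)$ coincide.

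None of the steps is a real obstacle; the only points needing care are the evaluation $K(f_x , \| \cdot \|) = 0$, including the degenerate singleton case, and the positivity of $\rho(1,0)$, which is needed for collinearity.
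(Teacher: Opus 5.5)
Your proof is correct. The paper does not prove this lemma itself (it is quoted from \cite{acosta2025constructible}), but your key evaluation $\rho\| f_x \| = \rho(\|x\|,0) = \rho(1,0)\|x\|$ is exactly the computation the paper uses in the proof of Theorem \ref{Theorem same distance in components} and in (\ref{technical projection of eta_n}). Items (2) and (3) then follow, as you show, by pulling the positive scalar $\rho(1,0)$ out of the supremum and cancelling it in the difference quotients; your direct computation in (3) is also sounder than citing Lemma \ref{lemma propertis of rho with K}, whose hypotheses (self-maps of a closed bounded convex set) do not match here.
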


\begin{theorem}\label{Theorem construct Intrin Charac}
Let $X$ be a normed space, $C$ a nonempty subset of $X$ and $\eta ( \cdot ) \in \mathcal{N}(BLip(C, X))$, Then the following statements are equivalent:
\begin{itemize}
\item[(1)] $\eta(\cdot ) \in \mathcal{N}(\mathbb{R}^2) \mathcal{N}(X)$.
\item[(2)] For each $T, S \in BLip(C, X)$ that are $\phi (\eta(\cdot))$-indistinguishable is satisfied $ \eta( T ) = \eta ( S )$.
\item[(3)] $\eta (\cdot ) \in \mathcal{N}(\mathbb{R}^2) \phi (\eta(\cdot ))$.
\end{itemize}
\end{theorem}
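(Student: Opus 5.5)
We prove the cycle of implications $(1)\Rightarrow(3)\Rightarrow(2)\Rightarrow(1)$; the substantive step is the middle one.

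\emph{Step $(1)\Rightarrow(3)$.} Write $\eta(\cdot) = \rho\|\cdot\|$ with $\rho \in \mathcal{N}(\mathbb{R}^2)$ and $\|\cdot\| \in \mathcal{N}(X)$. By item (1) of Lemma \ref{TecCol}, $\phi(\eta(\cdot)) = c\|\cdot\|$ for some $c>0$. Since $K(T, c\|\cdot\|) = c\,K(T,\|\cdot\|)$ and $\|T\|_{\infty}$ also scales by $c$, we get
\begin{equation*}
\eta(T) = \rho\big(c^{-1}\|T\|_{c,\infty},\, c^{-1}K(T,c\|\cdot\|)\big) = \rho'\|T\|, \qquad \rho' = c^{-1}\rho \in \mathcal{N}(\mathbb{R}^2),
\end{equation*}
where the right-hand side is computed with the norm $\phi(\eta(\cdot))$. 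This is (3). Alternatively, Theorem \ref{EquivSeparation} gives this directly.

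\emph{Step $(3)\Rightarrow(2)$.} If $\eta = \rho\,\phi(\eta)$, then the value of $\eta(T)$ depends only on the pair $(\|T\|_\infty, K(T,\phi(\eta)))$ computed with respect to $\phi(\eta)$. Hence any two $\phi(\eta)$-indistinguishable operators $T, S$ satisfy $\eta(T) = \eta(S)$, which is (2).

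\emph{Step $(2)\Rightarrow(1)$.} Set $p = \phi(\eta(\cdot))$; this is a norm on $X$ equivalent to the base norm, because $\eta$ is equivalent to $\|\cdot\|_b$ and $\|f_x\|_b = \|x\|$. Let
\begin{equation*}
A = \{(\|T\|_\infty, K(T,p)) : T \in BLip(C,X)\} \subset [0,\infty)^2 .
\end{equation*}
By (2) there is a well-defined function $\theta : A \to [0,\infty)$ with $\theta(a,k) = \eta(T)$ for any $T$ realizing $(a,k)$. The task is to show that $\theta$ is the restriction to the first quadrant of a norm on $\mathbb{R}^2$.

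For this we need three facts.
\begin{itemize}
\item[i)] The set $A$ is large. By Lemma \ref{bound2}, every pair $(\varepsilon, r)$ with $r$ below $s = \sup 2\varepsilon/p(x-y)$ occurs, so $A$ contains a region bounded by a line through the origin. Since $C$ is bounded, $s$ is finite and $s = 2\varepsilon/\operatorname{diam}_p(C)$, so only these pairs arise when $C$ has at least two points.
\item[ii)] $\theta$ is positively homogeneous, because $\|\lambda T\|_\infty$ and $K(\lambda T,p)$ scale with $\lambda$.
\item[iii)] $\theta$ is monotone and subadditive on $A$. For $(a,k),(a',k') \in A$, we want realizations $T, S$ with $\|T+S\|_\infty = a+a'$ and $K(T+S,p) = k+k'$. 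We build them along a common segment $[x,y]$, using the retract extension of Lemma \ref{retract lemma} to keep the constants and then extend to $C$. Then $\theta(a+a',k+k') = \eta(T+S) \le \eta(T)+\eta(S)$.
\end{itemize}

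Finally, a positively homogeneous, subadditive, positive function on a cone in the first quadrant extends to a norm on $\mathbb{R}^2$. We take the Minkowski functional of the absolutely convex hull of $\{\theta \le 1\}$ reflected through the axes, and check that it agrees with $\theta$ on $A$ by monotonicity. This yields $\eta = \theta\,p \in \mathcal{N}(\mathbb{R}^2)\mathcal{N}(X)$.

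The main obstacle is this last step: proving that $\theta$ is genuinely a restriction of a norm. It requires simultaneous realizations that are "aligned", so that the sup norm and the Lipschitz constant of $T+S$ add exactly, together with monotonicity in each coordinate, so that the extension does not undercut $\theta$. My first attempt would be to build all test maps as maps from $C$ into a segment $[0,z]$ via compositions $t \mapsto g(t)z$ with $g$ real-valued Lipschitz. This makes sums of such maps behave like sums of real functions that attain their extremes at the same points.
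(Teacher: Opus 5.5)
The paper gives no proof of this theorem; it quotes it from the authors' earlier work, so your argument has to stand on its own. The reduction to a function $\theta$ on $A$ and its homogeneity are fine, and $(1)\Rightarrow(3)\Rightarrow(2)$ is correct in substance. But $(2)\Rightarrow(1)$ has a genuine gap, and the route you sketch cannot work for an arbitrary nonempty $C$.

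Your final step (the gauge of the absolutely convex hull of the reflections of $\{\theta\le 1\}$) always produces an absolute norm, hence one that is monotone on the first quadrant. So it can agree with $\theta$ on $A$ only if $\theta$ is monotone on $A$. You assert this monotonicity without proof, and it is false in general:
\begin{itemize}
\item[$\cdot$] Take $X=\mathbb{R}$, $C=\{0,1\}$, $0<\epsilon<1$ and $\eta(T)=|T(0)+T(1)|+\epsilon|T(0)-T(1)|$. This is an equivalent norm on $BLip(C,X)\cong\mathbb{R}^2$ with $p=\phi(\eta)=2|\cdot|$.
\item[$\cdot$] The pair $(\max(|u|,|v|),|u-v|)$ determines $(u,v)=(T(0),T(1))$ up to swapping and a global sign, and $\eta$ is invariant under both, so (2) holds.
\item[$\cdot$] Here $A=\{(a,k):0\le k\le a\}$ and $\theta(a,k)=a-(1-\epsilon)k$, so $\theta(1,0)=1>\epsilon=\theta(1,1)$.
\item[$\cdot$] Your hull contains $(1,\pm 1)/\epsilon$, hence $(1,0)/\epsilon$, so your $\rho$ gives $\rho(1,0)\le\epsilon<\theta(1,0)$.
\end{itemize}
Statement (1) does hold in this example, but only via a non-absolute norm such as $\rho(a,k)=|a-k|+\epsilon|k|$.

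Your general claim that a positively homogeneous, subadditive, positive function on a cone in the quadrant extends to a norm on $\mathbb{R}^2$ is also false. Let $\theta$ be the gauge of $\mathrm{conv}\{0,(\delta,0),(1,t),(0,1)\}$ with $t>0$ and $\delta(1+t)<1$. Any norm agreeing with $\theta$ is $\le 1$ at $(1,t)$ and at $(0,-1)$. It is therefore $\le 1$ at $\tfrac{1}{1+t}(1,t)+\tfrac{t}{1+t}(0,-1)=(\tfrac{1}{1+t},0)$, where $\theta=\tfrac{1}{(1+t)\delta}>1$. So the missing property is the heart of the matter, not a technicality.

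What you need is a mechanism that ties $\theta$ to an honest norm on $\mathbb{R}^2$, and it depends on the shape of $A$.

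\emph{Sector case.} In the simplest version, $d_0=\min\{p(x-y):x\ne y\in C\}>0$ is attained at $x_0,y_0$. Fix $p(z)=1$ and let $g:C\to[0,1]$ be a truncated McShane extension of $g(x_0)=1$, $g(y_0)=0$ with $K(g,p)=1/d_0$. Then $L(u,v)=(v+(u-v)g)z$ is linear and injective, with $\|L(u,v)\|_\infty=\max(|u|,|v|)$ and $K(L(u,v),p)=|u-v|/d_0$. Hence $\theta(a,k)=(\eta\circ L)(a,a-kd_0)$ on $A$, which is a norm after an invertible linear change of variables.

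\emph{Half-quadrant case.} When $A$ is the whole half-quadrant (e.g.\ $C$ convex with two points, where $s=+\infty$ as in Lemma \ref{values of infinite and Lipschitz}), monotonicity does hold and an absolute extension is viable, but it must be proved. One way is to write a map with data $(a,k)$ as a convex combination of maps with data $(a',k)$, $a'\ge a$ (constant shifts of a small-amplitude map), or $(a,k')$, $k'\ge k$ (slopes placed on disjoint subsegments). You also need continuity of $\theta$ to handle the positive $k$-axis, which $A$ omits. Your aligned realizations for subadditivity are likewise only sketched.

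Two smaller slips:
\begin{itemize}
\item[$\cdot$] The Lipschitz constant is invariant under rescaling the norm (Lemma \ref{TecCol}(3)). So in $(1)\Rightarrow(3)$ the correct norm is $\rho(c^{-1}\cdot,\cdot)$, not $c^{-1}\rho$; your fallback to Theorem \ref{EquivSeparation} covers this.
\item[$\cdot$] In your item i) the supremum is $2\varepsilon/\inf\{p(x-y):x\ne y\in C\}$, not $2\varepsilon/\mathrm{diam}_p(C)$, and $C$ is not assumed bounded.
\end{itemize}
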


\section{A first metrical approach to constructible norms}

In this section, we demonstrate a series of technical results that will serve to demonstrate the completeness of the space of constructible norms and that refer to the continuity of the values adopted by the infinite norm and the Lipschitz constant, the structure of equivalence classes of constructible norms, and as a main result we present the relationship between the distance between constructible norms and the distances of their constitutive norms.

\begin{lemma}\label{values of infinite and Lipschitz}
Let $(X , \| \cdot \|)$ be a Banach space and $C\subset X$ a closed, bounded and convex set with at least two elements. Then for each $\varepsilon > 0$ and $s \geq 0$ there exists $T \in BLip(C,X)$ such that $\| T \|_\infty = \varepsilon$ and $K(T , \| \cdot \|) = s$.
\end{lemma}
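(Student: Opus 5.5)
The natural route is to reduce to Lemma \ref{bound2}. Since $C$ is bounded with at least two elements, its diameter $D = \sup\{\|x-y\| : x,y \in C\}$ is finite and positive. Hence for $\varepsilon>0$ the quantity
\begin{equation*}
s_\varepsilon = \sup\left\{ \frac{2\varepsilon}{\|x-y\|} \,\middle|\, x,y \in C,\ x\neq y \right\}
\end{equation*}
satisfies $s_\varepsilon \geq 2\varepsilon/D > 0$. Moreover, $C$ is convex, so for any two distinct points $x_0,y_0 \in C$ the whole segment $[x_0,y_0]$ lies in $C$. This segment contains points at arbitrarily small positive distance from each other. Therefore $s_\varepsilon = \infty$ for every $\varepsilon>0$.

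Now fix $\varepsilon>0$ and $s\geq 0$. Since $s < \infty = s_\varepsilon$, the first part of Lemma \ref{bound2}, applied with this $\varepsilon$ and $r = s$, gives $T\in BLip(C,X)$ with $\|T\|_\infty=\varepsilon$ and $K(T,\|\cdot\|)=s$. That is exactly the claim.

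The only step needing care is the verification that $s_\varepsilon=\infty$, and that step is where convexity is used. Pick $x_0\neq y_0$ in $C$ and set $x_t = t x_0 + (1-t) y_0 \in C$. Then $\|x_t - y_0\| = t\|x_0-y_0\|$, which tends to $0$ as $t\downarrow 0$. So $2\varepsilon/\|x_t-y_0\|$ is unbounded.

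If one prefers not to rely on the full strength of Lemma \ref{bound2}, a direct construction is possible. Take $T = f_z + g$, where $g$ is a scalar Lipschitz function times a fixed vector, and use Lemma \ref{retract lemma} to extend a map defined on a segment so that the sup norm and the Lipschitz constant are preserved. This is a fallback only; the main plan is the direct application of Lemma \ref{bound2}.
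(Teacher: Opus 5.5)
Your proposal is correct and is essentially the paper's proof: the paper likewise uses convexity of the nontrivial set $C$ to get $\inf\{\|x-y\| : x,y\in C,\ x\neq y\}=0$, which makes the supremum in Lemma \ref{bound2} infinite, and then applies that lemma with $r=s$. Your segment computation $\|x_t-y_0\|=t\|x_0-y_0\|$ just spells out the step the paper states in one line; the diameter bound you include is harmless but not needed.
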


\begin{proof}
Since $C$ is a nontrivial convex, then $\inf \{ \| x - y \| \, | \, x , y \in C , x \neq y \} = 0$. Thus for each $\varepsilon > 0$ we have that $\sup \left\{\frac{2 \varepsilon}{\|x-y \|}: x, y \in C, x \neq y \right\} = \infty$. Then by Lemma \ref{bound2} the conclusion follows. 
\end{proof}

\begin{lemma}\label{Lemma form of constructible norms}
Let $X$ be a Banach space, $C$ a convex, closed and bounded subset of $X$ with at least two elements and $\overline{\rho \| \cdot \|} \in (\mathcal{N}(X) \mathcal{N}(\mathbb{R}^2))'$ be the equivalence class of a constructible norm. Then for each $\eta \in \overline{\rho \| \cdot \|}$ and $\| \cdot \|_c \sim_c \| \cdot \|$ there exist $d,e > 0$ such that for each $T \in BLip(C,X)$

\begin{equation}\label{form of constructible norms}
\eta(T ) = d  \, \rho(e^{-1} \,  \| T \|_{c,\infty} , K(T, \| \cdot \|_c)).
\end{equation}

\noindent Moreover (\ref{form of constructible norms}) is the general form of the elements in $\overline{\rho \| \cdot \|}$. That is, if $\| \cdot \|' \in \mathcal{N}(X)$ and $\rho' \in \mathcal{N}(\mathbb{R}^2)$ exist such that $\rho' \| T \|' = \rho \|T\|$ for each $T \in BLip(C,X)$. Then $\| \cdot \|' = a \| \cdot \|$ for some $a > 0$ and $\rho'( \cdot , \cdot ) = \rho(a^{-1} \, \cdot , \cdot )$.
\end{lemma}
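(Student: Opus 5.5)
The plan splits into two parts: first the existence of the representation (\ref{form of constructible norms}), then the uniqueness statement. For existence, $\eta \in \overline{\rho \| \cdot \|}$ means $\eta = d\,\rho\| \cdot \|$ for some $d>0$, and $\| \cdot \|_c \sim_c \| \cdot \|$ means $\| \cdot \|_c = e\| \cdot \|$ for some $e>0$. The two basic quantities transform as follows: $\| T \|_{c,\infty} = e\| T\|_\infty$ directly, and $K(T,\| \cdot \|_c) = K(T,\| \cdot \|)$ by incise 1) of Lemma \ref{lemma propertis of rho with K}. Indeed, the ratio $\|Tx-Ty\|_c/\|x-y\|_c$ is unchanged by scaling, so this holds for any $T:C\to X$, not only self-maps. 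Substituting gives
\begin{equation*}
\eta(T) = d\,\rho(\| T\|_\infty , K(T,\| \cdot \|)) = d\,\rho(e^{-1}\| T \|_{c,\infty}, K(T,\| \cdot \|_c)),
\end{equation*}
which is (\ref{form of constructible norms}). Conversely, every expression of this form is $d$ times a norm collinear to $\rho\| \cdot \|$, so it lies in the class. Hence (\ref{form of constructible norms}) is the general form of the elements of $\overline{\rho \| \cdot \|}$.

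For the uniqueness part, suppose $\rho'\| T\|' = \rho\| T\|$ for all $T$. By Theorem \ref{EquivSeparation}, (2)$\Rightarrow$(1), we get $\| \cdot \|' = a\| \cdot \|$ for some $a>0$. Alternatively, apply the projection $\phi$ and use Lemma \ref{TecCol}(1): $\| \cdot \|' \sim_c \phi(\rho'\| \cdot \|') = \phi(\rho\| \cdot \|) \sim_c \| \cdot \|$. Substituting $\| \cdot \|'=a\| \cdot \|$ and using again $\|T\|'_\infty = a\|T\|_\infty$ and $K(T,\| \cdot \|') = K(T,\| \cdot \|)$, the hypothesis becomes
\begin{equation*}
\rho'(a\| T\|_\infty, K(T,\| \cdot \|)) = \rho(\| T\|_\infty, K(T,\| \cdot \|))
\end{equation*}
for every $T \in BLip(C,X)$.

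The key step, and the only place the hypothesis on $C$ is needed, is to pass from this identity on the pairs realized by operators to an identity of norms on the first quadrant. Lemma \ref{values of infinite and Lipschitz} is used here, since $C$ is closed, bounded, convex and has at least two points. For every $t>0$ and $s\ge 0$ it gives some $T$ with $\| T\|_\infty = t$ and $K(T,\| \cdot \|) = s$, hence $\rho'(at,s) = \rho(t,s)$, that is, $\rho'(u,s) = \rho(a^{-1}u,s)$ for all $u>0$, $s\geq 0$. The boundary ray $u=0$ is not directly covered, because the lemma requires $\varepsilon>0$. I expect this to be the main, though mild, obstacle, and I would handle it by continuity: norms on $\mathbb{R}^2$ are continuous, so letting $u\downarrow 0$ gives $\rho'(0,s) = \rho(0,s)$ as well. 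Thus $\rho'(\cdot,\cdot) = \rho(a^{-1}\cdot,\cdot)$ on the closed first quadrant, which by the paper's convention is where these norms are considered.
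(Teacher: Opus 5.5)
Your proposal is correct and follows the paper's proof essentially step for step. Existence comes from substituting $\eta = d\,\rho\|\cdot\|$ and $\|\cdot\|_c = e\|\cdot\|$. Uniqueness comes from Theorem \ref{EquivSeparation} followed by Lemma \ref{values of infinite and Lipschitz}, which realizes every pair $(t,s)$ with $t>0$, $s\ge 0$. Your continuity argument for the ray $t=0$ and your explicit restriction of the conclusion to the first quadrant are small refinements the paper leaves implicit. The restriction is where the identity genuinely holds, since a constructible norm never sees the values of $\rho'$ in the second and fourth quadrants.
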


\begin{proof}
Since the elements in $\overline{\rho \| \cdot \|}$ are scalar multiple of the norm $\rho \| \cdot \|$, then exists $d > 0$ such that $ \eta(\cdot ) = d ( \rho \| \cdot \|) = (d \rho)\| \cdot \|$. By hypothesis we have that $\| \cdot \|_c = e \| \cdot \|$ for some $e > 0$. Then for each $T \in BLip(C,X)$

\begin{equation}
\begin{split}
\eta(T) & = (d \rho) \| T \| \\
        & = d \rho ( \| T \|_\infty , K( T , \| \cdot \|)) \\
        & = d \rho (e^{-1} e \| T \|_\infty , K(T, \| \cdot \|_c)) \\
        & = d \rho (e^{-1}\| T \|_{c,\infty} , K(T, \| \cdot \|_c)).
\end{split}
\end{equation}

Now we will prove that (\ref{form of constructible norms}) is unique in its expression. If $\rho' \| T \|' = \rho \| T \|$ for each $T \in BLip(C,X)$, then by Theorem \ref{EquivSeparation} $\| \cdot \|' = a \| \cdot \|$ for some $a > 0$ and we have that 

\begin{equation}\label{equality of rho for constructible norms}
\begin{split}
\rho (\| T \|_\infty , K(T , \| \cdot \|)) & = \rho \| T \| \\
                                                    & = \rho'\| T \|' \\
                                                    & = \rho'(\| T \|'_\infty , K(T, \| \cdot \|')) \\
                                                    & = \rho'(a \| T \|_\infty , K(T , \| \cdot \|)).
\end{split}
\end{equation}

\noindent By Lemma \ref{values of infinite and Lipschitz} it follows that (\ref{equality of rho for constructible norms}) is valid for each combination of positive values on the arguments $\| T \|_\infty$ and $K(T , \| \cdot \|)$. Hence as functions $ \rho' ( \, \cdot , \cdot ) = \rho ( a^{-1} \, \cdot , \cdot )$. 
\end{proof}

\begin{lemma}\label{Lemma technical indistinguible under two norms}
Let $X$ be a Banach space, $C$ a convex, closed and bounded subset of $X$ with at least two different elements and $\| \cdot \|_1 , \| \cdot \|'_2 \in \mathcal{N}(X)$. Then exists $\| \cdot \|_2 \sim_c \| \cdot \|'_2$ such that for each $r > 0 $ and $s \geq 0$ there exists $T \in BLip(C, X)$ with

\begin{equation}
\| T \|_{1, \infty} = \| T \|_{2 , \infty} = r
\end{equation}

\noindent and

\begin{equation}
K(T , \| \cdot \|_1) = K(T , \| \cdot \|_2) = s
\end{equation}
\end{lemma}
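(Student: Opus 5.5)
The plan is to build each operator $T$ in the form
\begin{equation*}
T(x)=h(\varphi(x))\,z ,
\end{equation*}
where $\varphi$ is a linear functional, $h:\mathbb{R}\to\mathbb{R}$ is a scalar Lipschitz function, and $z$ is a fixed vector. The functional $\varphi$, the vector $z$ and the single rescaling of $\| \cdot \|'_2$ are chosen once, at the start, so that for every such $T$ the four quantities $\| T \|_{i,\infty}$ and $K(T,\| \cdot \|_i)$, $i=1,2$, reduce to data that coincide for both norms. The scalar function $h$ is then adjusted to each pair $(r,s)$, in the spirit of Lemma \ref{bound2} and Lemma \ref{values of infinite and Lipschitz}.

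\emph{Step 1 (choice of $\varphi$ and of the rescaling).} Let $Y=\overline{\operatorname{span}}(C-C)$. This is the only subspace that matters for Lipschitz constants of maps defined on $C$, and it is nontrivial because $C$ has two points. If $\dim Y=1$ the argument below simplifies, since all norms on $Y$ are collinear; so assume $\dim Y\ge 2$ whenever that is needed. Fix a nonzero $\varphi\in Y^*$. Rescale $\| \cdot \|'_2$ to $\| \cdot \|_2=c\| \cdot \|'_2$ with $c>0$ chosen so that the dual norms of $\varphi|_Y$ computed with respect to $\| \cdot \|_1$ and $\| \cdot \|_2$ coincide; call the common value $\|\varphi\|^*$. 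For a linear functional on the convex set $C$ one has
\begin{equation*}
\sup_{x\neq y\in C}\frac{|\varphi(x)-\varphi(y)|}{\|x-y\|_i}=\|\varphi|_Y\|^*_i ,
\end{equation*}
because $C-C$ is absorbing in $Y$ up to scaling. Hence $\varphi|_C$ has the same Lipschitz constant $\|\varphi\|^*$ for both norms.

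\emph{Step 2 (choice of $z$).} Next I want $z\neq 0$ with $\|z\|_1=\|z\|_2$. Consider the ratio $x\mapsto \|x\|_2/\|x\|_1$ on the unit sphere of $X$. The sphere is connected when $\dim X\ge 2$, so the image of this ratio is an interval $[l,u]$, where $l,u$ are the sharp constants relating $\| \cdot \|_2$ and $\| \cdot \|_1$. By duality, the ratio of the two dual norms of any functional lies in $[u^{-1},l^{-1}]$. Since Step 1 forces that dual ratio to be $1$ for $\varphi$, we get $l\le 1\le u$, and connectedness then gives some $z$ with $\|z\|_1=\|z\|_2$. Normalize so that this common value is $1$. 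If $\dim X=1$, all norms are collinear and one simply takes $\| \cdot \|_2=\| \cdot \|_1$.

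\emph{Step 3 (realizing $(r,s)$).} For $T=(h\circ\varphi)z$ we then have
\begin{equation*}
\| T \|_{i,\infty}=\sup_{t\in\varphi(C)}|h(t)|
\qquad\text{and}\qquad
K(T,\| \cdot \|_i)=\operatorname{Lip}(h|_{\varphi(C)})\cdot\|\varphi\|^* ,
\end{equation*}
for both $i=1,2$. The second identity holds provided $h$ is piecewise linear and attains its maximal slope on a subinterval of $\varphi(C)$, so that the supremum in Step 1 is actually achieved. The set $\varphi(C)$ is a nondegenerate bounded interval $J$. It then remains to find, for given $r>0$ and $s\ge 0$, a piecewise linear $h$ on $J$ with $\sup_J|h|=r$ and slope supremum $s/\|\varphi\|^*$. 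For $s=0$ take $h\equiv r$. For $s>0$ take $h=r-\min\bigl(r,\ (s/\|\varphi\|^*)\,|t-t_0|\bigr)$ truncated suitably: a tent of height $r$ with steep sides on a small subinterval of $J$ around an interior point $t_0$. When $s$ is large the tent is narrow. When $s$ is small one instead uses $h(t)=r-(s/\|\varphi\|^*)|t-t_0|$, which is valid provided $r$ exceeds the oscillation needed; otherwise add a constant and reflect so that $\sup|h|=r$ is still attained. This case analysis is the same one underlying Lemma \ref{bound2}, with the role of $2\varepsilon/\|x-y\|$ played by $2r/|J|$. It works because $J$ has positive length and arbitrarily short subintervals, exactly as in the proof of Lemma \ref{values of infinite and Lipschitz}.

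\emph{Main obstacle.} I expect the main obstacle to be the identity $K(T,\| \cdot \|_i)=\operatorname{Lip}(h)\,\|\varphi\|^*$ in Step 3. The inequality ``$\le$'' is immediate. For ``$\ge$'' one needs pairs $x,y\in C$ that nearly norm $\varphi$ in $Y$ and whose images $\varphi(x),\varphi(y)$ both land in the short subinterval where $h$ has maximal slope. I would handle this by using convexity of $C$: shrink a near-norming segment of $C$ homothetically toward a point $x_0\in C$ with $\varphi(x_0)=t_0$. This keeps the direction of the segment, hence the ratio $|\varphi(x)-\varphi(y)|/\|x-y\|_i$, while moving the images into any prescribed neighbourhood of $t_0$. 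The secondary delicate point is the connectedness argument of Step 2, which needs $\dim X\ge 2$; the one-dimensional case is handled separately as noted.
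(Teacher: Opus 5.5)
Your Steps 1 and 3 are sound. The homothety argument gives the lower bound for $K(T,\|\cdot\|_i)$, provided you shrink toward a point whose $\varphi$-value lies inside one flank of the tent rather than at its apex. The tent $h(t)=\max\{0,\,r-(s/\|\varphi\|^*)|t-t_0|\}$, with $t_0$ interior to $J$, already covers every $s>0$ without further case analysis. Step 2, however, has a genuine gap.

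In infinite dimensions the image of $x\mapsto \|x\|_2/\|x\|_1$ on the unit sphere is an interval with endpoints $l,u$, but it need not contain them. So $l\le 1\le u$ does not yield $z\neq 0$ with $\|z\|_1=\|z\|_2$ when $1$ is an unattained endpoint. This really happens for an admissible $\varphi$:
\begin{itemize}
\item[] $X=c_0$, $\|\cdot\|_1=\|\cdot\|_\infty$, and $\|x\|'_2=\|x\|_\infty+\sup_n 2^{-n}|x_n-x_{n+1}|$, so that $\|\cdot\|_\infty\le\|\cdot\|'_2\le 2\|\cdot\|_\infty$;
\item[] $C$ is the closed unit ball, so $Y=X$, and $\varphi(x)=\sum_n 2^{-n}x_n$.
\end{itemize}
The $\|\cdot\|_1$-dual norm of $\varphi$ is $1$. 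Its $\|\cdot\|'_2$-dual norm is at most $1$ because $\|\cdot\|'_2\ge\|\cdot\|_\infty$. It is at least $1$ because $x^{(k)}=e_1+\dots+e_k$ gives $\varphi(x^{(k)})/\|x^{(k)}\|'_2=(1-2^{-k})/(1+2^{-k})\to 1$. So your Step 1 forces $c=1$.

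But $\|z\|'_2-\|z\|_\infty=\sup_n 2^{-n}|z_n-z_{n+1}|>0$ for every $z\neq 0$ in $c_0$, since vanishing differences force $z$ to be constant, hence $0$. So no admissible $z$ exists. For $T=(h\circ\varphi)z$, the two infinity norms and the two Lipschitz constants then differ by the factor $\|z\|_2/\|z\|_1\neq 1$. Your argument is valid as written only when the sharp constants are attained, e.g.\ in finite dimensions.

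The fix is to reverse the order of the choices:
\begin{itemize}
\item[] First fix $z\in Y\setminus\{0\}$, for instance $z=x-y$ with $x\neq y$ in $C$ (exactly the paper's $w'$). Rescale so that $\|z\|_1=\|z\|_2=1$.
\item[] By Hahn--Banach in $Y$, take $\psi_1,\psi_2\in Y^*$ with $\psi_i(z)=1$ and $\|\psi_i\|_i^*=1$.
\item[] For $\psi_t=t\psi_1+(1-t)\psi_2$ one has $\psi_t(z)=1$, so $\psi_t\neq 0$. The function $g(t)=\|\psi_t\|_2^*-\|\psi_t\|_1^*$ is continuous, with $g(1)\ge \psi_1(z)/\|z\|_2-1=0$ and $g(0)\le 1-\psi_2(z)/\|z\|_1=0$.
\item[] An intermediate $t_0$ gives $\varphi=\psi_{t_0}$ with equal dual norms. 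Your Step 3 then goes through unchanged: extend $\varphi$ to $X$, or use $x\mapsto\varphi(x-x_0)$ on $C$, taking real parts in the complex case.
\end{itemize}
This intermediate-value step is the dual counterpart of the paper's balancing of $K(T_\lambda,\|\cdot\|_1)$ against $K(T_\lambda,\|\cdot\|_2)$ along convex combinations of two extensions. With it, your functional-times-profile construction is a genuinely different and somewhat cleaner route. It needs neither the retraction lemma nor the final renormalization by $s/s_0$, and the case $s=0$, where the paper's quotient $s/s_0$ is undefined, becomes trivial.
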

\begin{proof}
Let  $x , y \in C$ with $x \neq y$ and $w' = x - y$. Then there exists $c > 0$ such that $c \|w' \|'_2 = \| w' \|_1$. We call $\| \cdot \|_2 = c \| \cdot \|'_2$ and we choose $w = \alpha' w'$ with $\alpha' > 0$ such that 

\begin{equation}\label{technical same infinity norm}
\|w \|_1 = \|w \|_2 = r,
\end{equation}
 
\noindent note that 

\begin{equation}\label{technical same norm on ray}
\|\alpha w \|_2 = \| \alpha w \|_1
\end{equation}

\noindent for each $\alpha \in \mathbb{F} = \mathbb{R} \vee \mathbb{C}$. Since $C$ is a nontrivial convex, then we can chose $x'  \in [x , y]\subset C$ with $x' \neq y$ and $0 \leq \delta \leq 1$ such that 

\begin{equation}
\displaystyle{\frac{\delta \| w \|_2}{ \| x' - y \|_2}} = s
\end{equation} 

\noindent Since $x' - y = \alpha w $ for some $0 < \alpha \leq 1$, then applying (\ref{technical same norm on ray}) we have  

\begin{equation}\label{technical same Lipschitz constant}
\displaystyle{\frac{\delta \| w \|_1}{ \| x' - y \|_1} = \frac{\delta \| w \|_2}{ \| x' - y \|_2} } = s.
\end{equation}

\noindent We define $T':[x , y] \to [0 , \delta w] \subset [0 , w]$ by 

\begin{equation*}
T'(v) = 
\begin{cases}
\beta \delta w , & \text{if } v  = \beta x' + (1- \beta )y \text{ for some } 0 \leq \beta \leq 1  \\
\delta w , & \text{if } v \in [x , y] \setminus [x' - y] \\
\end{cases}
\end{equation*} 

\noindent Employing (\ref{technical same infinity norm}) and (\ref{technical same Lipschitz constant}) it is not hard to check that $K(T', \| \cdot \|_1) = K(T' , \| \cdot \|_2) = s$ and $\| T' \|_{1 , \infty} = \| T' \|_{2 , \infty} = \delta r$.

By Lemma \ref{retract lemma} applied to $T'$ and norms $\| \cdot \|_1$ and $\| \cdot \|_2$ there exist extensions $T_1$ and $T_2$ of $T'$ from $C$ to $[0, \delta w]$ such that

\begin{equation}
\| T_1 \|_{1 , \infty} = \| T_2 \|_{2 , \infty} = \delta r
\end{equation}

\begin{equation}
K(T_1 , \| \cdot \|_1) = K(T_2 , \| \cdot \|_2) = s
\end{equation} 

\noindent Since $T_1$ and $T_2$ are an extensions of $T'$, then 

\begin{equation}
K(T_1 , \| \cdot \|_2) = s_1 \text{ and } K(T_2, \| \cdot \|_1) = s_2
\end{equation} 

\noindent for some $s_1 , s_2 \geq s$. For each $0 \leq \lambda \leq 1$ we define $T_\lambda = \lambda T_1 + (1 - \lambda) T_2$, it can be shown that 

\begin{equation}
\|T_\lambda \|_{1 , \infty} = \| T_\lambda \|_{2 , \infty} = \delta r
\end{equation}

\noindent and 

\begin{equation}
K(T_\lambda , \| \cdot \|_1) , K(T_\lambda , \| \cdot \|_2) \geq s
\end{equation}

\noindent for each $0 \leq \lambda \leq 1$. We define real continuous functions $f_1 (\lambda) = K(T_\lambda , \| \cdot \|_1)$ and $f_2 (\lambda) = K(T_\lambda , \| \cdot \|_2)$. We note that 
$f_1(0)  = s_2  \geq s  =  f_1(1)$ and $f_2(0)  = s  \leq  s_1  =  f_1(1)$, hence there exists $0 \leq \lambda_0 \leq 1$ such that $f_1(\lambda_0)= f_2(\lambda_0)$. That is, 

\begin{equation}\label{same Lipschitz constant}
K(T_{\lambda_0} , \| \cdot \|_1) = K(T_{\lambda_0} , \| \cdot \|_2) = s_0 \geq s. 
\end{equation}

\noindent We define 

\begin{equation}
T =  \displaystyle{\frac{s}{s_0}} T_{\lambda_0} + \left(1 - \displaystyle{\frac{s}{s_0}}\delta \right)w
\end{equation}

\noindent By (\ref{same Lipschitz constant}) and the fact that $T$ is a translation of $\displaystyle{\frac{s}{s_0}} T_{\lambda_0}$ we have that 

\begin{equation}
K(T , \| \cdot \|_i) = K\left(\displaystyle{\frac{s}{s_0}} T_{\lambda_0} , \| \cdot \|_i \right) = \displaystyle{\frac{s}{s_0}} s_0 = s
\end{equation}

\noindent for $i= 1 , 2$. Since $ 0 \leq \frac{s}{s_0} \leq 1$, then the image of $\displaystyle{\frac{s}{s_0}} T_{\lambda_0}$ is equals to $ \left[ 0 ,  \displaystyle{\frac{s}{s_0}} \delta w \right] \subset [0 , w]$. Hence the image of $T$ is equals to $\left[\left(1 - \displaystyle{\frac{s}{s_0}}\delta \right)w , w\right]$ and by (\ref{technical same infinity norm}) we have that

\begin{equation}
\| T \|_{1, \infty} = \| T \|_{2, \infty} = r.
\end{equation}
\end{proof}

\begin{theorem}\label{Theorem same distance in components}
Let $X$ be a Banach space, $C$ a convex, closed and bounded subset of $X$ with at least two elements, $\varepsilon >0$ and $\overline{\rho \| \cdot \|} , \overline{\rho' \| \cdot \|'} \in (\mathcal{N}(X) \mathcal{N}(\mathbb{R}^2))'$ be two equivalence classes of constructible norms such that $d\left(\overline{\rho \| \cdot \|} , \overline{\rho' \| \cdot \|'}\right) < \varepsilon$. Then for each $\rho_1 \| \cdot \|_1 \in \overline{\rho \| \cdot \|}$ and $\rho' \| \cdot \|' \in \overline{\rho' \| \cdot \|'} $ there exist $\rho_2 \in \mathcal{N}(\mathbb{R}^2)$ and $\| \cdot \|_2 \in \mathcal{N}(X)$ such that $\rho_2 \| \cdot \|_2  = \rho' \| \cdot \|'$,

\begin{equation}
d\left(\overline{\rho_1}, \overline{\rho_2}\right) < \varepsilon
\end{equation}

\noindent and

\begin{equation}
d\left(\overline{\| \cdot \|_1}, \overline{\| \cdot \|_2}\right) < \varepsilon.
\end{equation}

\noindent In other words, if the distance between two constructible norms is less than $\varepsilon$, then representatives can be chosen in such a way that individually their distance is less than $\varepsilon$.
\end{theorem}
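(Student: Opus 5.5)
Fix the representatives $\rho_1\|\cdot\|_1$ and $\rho'\|\cdot\|'$, and set $M=\varrho(\rho_1\|\cdot\|_1,\rho'\|\cdot\|')<e^{\varepsilon}$. After scaling we may assume that for every $T\in BLip(C,X)$
\begin{equation*}
l\,\rho'\|T\|'\le \rho_1\|T\|_1\le u\,\rho'\|T\|', \qquad u/l=M .
\end{equation*}

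\textbf{Step 1: choose the representative norm on $X$.} Apply Lemma \ref{Lemma technical indistinguible under two norms} to $\|\cdot\|_1$ and $\|\cdot\|'_2:=\|\cdot\|'$. It gives $\|\cdot\|_2=c\|\cdot\|'\sim_c\|\cdot\|'$ such that every pair $(r,s)$ with $r>0$, $s\ge0$ is realised by a single $T$ with the same infinity norm and the same Lipschitz constant under both $\|\cdot\|_1$ and $\|\cdot\|_2$.

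\textbf{Step 2: choose $\rho_2$.} By Lemma \ref{Lemma form of constructible norms}, put $\rho_2(a,b)=\rho'(c^{-1}a,b)$, so that $\rho_2\|\cdot\|_2=\rho'\|\cdot\|'$.

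\textbf{Step 3: bound $d(\overline{\rho_1},\overline{\rho_2})$.} Take $T$ from Step 1 realising $(r,s)$. Then
\begin{equation*}
\rho_1\|T\|_1=\rho_1(r,s), \qquad \rho_2\|T\|_2=\rho_2(r,s),
\end{equation*}
so $l\rho_2(r,s)\le\rho_1(r,s)\le u\rho_2(r,s)$ on the closed first quadrant (the case $r=0$ follows by continuity). Both norms are symmetric in sign (at least for $\mathbb{R}^2$ norms depending on absolute values of coordinates, which is the convention adopted in the paper), so this comparison extends to all of $\mathbb{R}^2$. Lemma \ref{properties of rho}(3) then gives $\varrho(\rho_1,\rho_2)\le M$, hence $d(\overline{\rho_1},\overline{\rho_2})\le\log M<\varepsilon$.

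\textbf{Step 4: bound $d(\overline{\|\cdot\|_1},\overline{\|\cdot\|_2})$.} Use the constant maps $f_x$. These have zero Lipschitz constant, so
\begin{equation*}
\rho_1\|f_x\|_1=\rho_1(1,0)\|x\|_1, \qquad \rho_2\|f_x\|_2=\rho_2(1,0)\|x\|_2 .
\end{equation*}
This is exactly the nonexpansiveness of $\phi$ (Lemma \ref{Lemma phi is nonexpansive}) combined with Lemma \ref{TecCol}(1). It gives
\begin{equation*}
d(\overline{\|\cdot\|_1},\overline{\|\cdot\|_2})\le d\bigl(\overline{\rho_1\|\cdot\|_1},\overline{\rho'\|\cdot\|'}\bigr)<\varepsilon .
\end{equation*}

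\textbf{Main obstacle.} The delicate point is Step 3: it needs test operators that are \emph{simultaneously} indistinguishable for $\|\cdot\|_1$ and $\|\cdot\|_2$. This is precisely why $\|\cdot\|_2$ must be rescaled first via Lemma \ref{Lemma technical indistinguible under two norms} rather than taking $\|\cdot\|'$ itself. That lemma is stated for $r>0$, which is why the axis $r=0$ is handled by continuity.

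Note also that this plan fixes $\rho_1\|\cdot\|_1$ and merely re-expresses $\rho'\|\cdot\|'$ as $\rho_2\|\cdot\|_2$. This matches the statement, since its conclusion only constrains the product $\rho_2\|\cdot\|_2$.
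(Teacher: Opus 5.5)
Your argument is essentially the paper's proof: you rescale to $\|\cdot\|_2=c\|\cdot\|'$ via Lemma \ref{Lemma technical indistinguible under two norms}, set $\rho_2(a,b)=\rho'(c^{-1}a,b)$, compare $\rho_1$ and $\rho_2$ on the first quadrant with simultaneously indistinguishable test operators, and compare the base norms with the constant maps $f_x$, just as the paper does (you only reverse the last two steps, and your continuity remark for $r=0$ covers a point the paper passes over). One correction: your reason for extending the comparison to all of $\mathbb{R}^2$ is wrong, because a norm on $\mathbb{R}^2$ need not satisfy $\rho(a,-b)=\rho(a,b)$ (for instance $\max(|a|,|b|)$ and the Minkowski functional of $\mathrm{conv}([0,1]^2\cup[-1,0]^2)$ coincide on the first quadrant but differ by a factor $2$ at $(1,-1)$); the paper's actual convention is that only first-quadrant values of $\rho$ are considered, and it uses Lemma \ref{inequality of norms over positive face of R^2}, which alters both norms off the first quadrant, when a global comparison is needed.
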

\begin{proof}
Let $\varepsilon > 0$, $\rho_1 \| \cdot \|_1 \in \overline{\rho \| \cdot \|}$, $\rho' \| \cdot \|' \in \overline{\rho' \| \cdot \|'}$ and $u \geq l > 0$ sharp such that for each $T \in BLip(C,X)$

\begin{equation}\label{distance for constructible pair with u and l}
l \rho_1 \| T \|_1 \leq \rho' \| T \|' \leq u \rho_1 \| T \|_1.
\end{equation}

\noindent Hence $\displaystyle{\log \frac{u}{l} <  \varepsilon}$. For each $x \in X$ we call the constant function $f_x:C \to X$ defined by $f_x(y) = x$ for each $y \in C$. It is clear that for each constructible norm $\rho \| \cdot \|$ we have that

\begin{equation}
\rho \| f_x \| = \rho (\| f_x \|_\infty , K(f_x , \| \cdot \|)) = \rho(1 , 0) \| x \|.
\end{equation}

\noindent Thus applying (\ref{distance for constructible pair with u and l}) to constant functions we have that 

\begin{equation}
l \rho_1(1 , 0) \| x \|_1 \leq \rho'(1 , 0) \| x \|' \leq u \rho_1 (1, 0) \| x \|_1.
\end{equation}

\noindent for each $x \in X$. By Lemma \ref{properties of rho} it follows that

\begin{equation}\label{distance of base norms for constructible norms}
\begin{split}
d(\| \cdot \|_1 , \| \cdot \|') & = d(\rho_1(1 , 0) \| \cdot \|_1 , \rho'(1, 0) \| \cdot \|') \\
                                 & \leq \log \displaystyle{\frac{\rho_1(1,0) u}{\rho_1(1, 0)l} = \log \frac{u}{l}} \\ 
                                 & < \varepsilon.
\end{split}
\end{equation}

\noindent Note that (\ref{distance of base norms for constructible norms}) is valid for any representatives $\| \cdot \|_1$ and $\| \cdot \|'$ of classes of construtible norms $\overline{\rho_1 \| \cdot \|_1}$ and $\overline{\rho' \| \cdot \|'}$. In fact, by Lemma \ref{Lemma form of constructible norms} we have that these representatives are only scalar multiples. 

By Lemma \ref{Lemma technical indistinguible under two norms} there exists a norm $\| \cdot\|_2 = c \| \cdot \|'$ such that for each $r >0$ and $s \geq 0$ there exists $T \in BLip(C,X)$ with $\| T \|_{1 , \infty} = \| T \|_{2 , \infty}=r$ and  $K(T , \| \cdot \|_1) = K(T , \| \cdot \|_2)=s$. Using (\ref{distance of base norms for constructible norms}) and Lemma \ref{properties of rho} we have that

\begin{equation}
d(\| \cdot \|_1 , \| \cdot \|_2) < \varepsilon.
\end{equation}

\noindent By Lemma \ref{Lemma form of constructible norms} the norm $\rho_2( \cdot , \cdot ) = \rho'(c^{-1} \, \cdot , \cdot )$ satisfy $\rho_2 \| \cdot \|_2 = \rho' \| \cdot \|'$. Since 

\begin{equation}
d(\rho_1 \| \cdot \|_1 , \rho_2 \| \cdot \|_2) = d (\rho_1 \| \cdot \|_1 , \rho' \| \cdot \|') < \varepsilon,
\end{equation}

\noindent then exist sharp $u' \geq l' > 0$ such that $\displaystyle{\frac{u'}{l'} = \frac{u}{l} < e^\varepsilon}$ and

\begin{equation}
l' \rho_1 \| S \|_1 \leq \rho_2 \| S \|_2  \leq u' \rho_1 \| S \|_1.
\end{equation}

\noindent for each $S \in BLip(C, X)$. Let $r > 0$ and $s \geq 0$. Then by Lemma \ref{Lemma technical indistinguible under two norms} there exists $T \in BLip(C,X)$ such that   $\| T \|_{1 , \infty} = \| T \|_{2 , \infty} = r$ and  $K(T , \| \cdot \|_1) = K(T , \| \cdot \|_2)=s$. That is,

\begin{equation}
\begin{split}
l' \rho_1 (r , s)  & = l' \rho_1 \| T \|_1 \\
                   & \leq \rho_2 \| T \|_2  = \rho_2 (r,s)  \\
                   & \leq u' \rho_1 \| T \|_1 \\
                   & = u' \rho_1(r , s).
\end{split}
\end{equation}

\noindent Hence $l '\rho_1(\cdot , \cdot ) \leq \rho_2(\cdot , \cdot ) \leq u' \rho_1(\cdot , \cdot )$ and

\begin{equation}
d(\rho_1 , \rho_2 ) \leq \log \displaystyle{\frac{u'}{l'}} < \varepsilon. 
\end{equation}
\end{proof}

The following corollary is a particular case of the previous theorem, in which an additional condition is added to take the constructible norms sharp one inside the other.

\begin{corollary}\label{Corollary technical c leq u constant projection}
Let $X$ be a Banach space, $C$ a nonempty convex, closed and bounded subset of $X$, $\rho_1 \| \cdot \|_1 , \rho' \| \cdot \|'$ be two constructible norms for $BLip(C,X)$ that satisfy in a sharp way 

\begin{equation}
\| \cdot \|' \leq \| \cdot \|_1 \leq u \| \cdot \|'
\end{equation}
 
\noindent for some $u \geq 1$ with $\log u \leq d(\rho_1 \| \cdot \|_1 , \rho' \| \cdot \|')$. Then the norms $\rho_2 \in \mathcal{N}(\mathbb{R}^2)$ and $\| \cdot \|_2$ whose existence is assured in Theorem \ref{Theorem same distance in components} necessarily fulfill:

\begin{itemize}
\item[(1)] $\| \cdot \|_2 = c \| \cdot \|'$ for some $1 \leq c \leq u$.
\item[(2)] For each $r> 0$ and $s \geq 0$ there exists $T \in BLip(C,X)$ such that

\begin{equation}
\| T \|_{1, \infty} = \| T \|_{2 , \infty} = r
\end{equation}

\noindent and

\begin{equation}
K(T , \| \cdot \|_1) = K(T, \| \cdot \|_2)=s.
\end{equation}
\end{itemize}
\end{corollary}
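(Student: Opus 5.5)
The plan is to retrace the proof of Theorem \ref{Theorem same distance in components} and record, under the extra sharp hypothesis, which constant $c$ that construction is forced to produce. Part (2) needs no new work. The norm $\| \cdot \|_2$ in Theorem \ref{Theorem same distance in components} is by construction the norm supplied by Lemma \ref{Lemma technical indistinguible under two norms} applied to $\| \cdot \|_1$ and $\| \cdot \|'_2 := \| \cdot \|'$. That lemma gives exactly the statement in (2): for each $r>0$ and $s \geq 0$ there is $T$ with equal infinity norms $r$ and equal Lipschitz constants $s$ under $\| \cdot \|_1$ and $\| \cdot \|_2$. So (2) is immediate once we note that the theorem's $\| \cdot \|_2$ is that lemma's norm.

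For (1), I would start from the relation $\| \cdot \|_2 = c \| \cdot \|'$, which is part of Lemma \ref{Lemma technical indistinguible under two norms}, and bound $c$. The first route uses how $c$ was chosen. In the proof of that lemma, $c$ is defined by $c \| w' \|' = \| w' \|_1$ for a nonzero vector $w' = x - y$. The sharp hypothesis $\| \cdot \|' \leq \| \cdot \|_1 \leq u \| \cdot \|'$ evaluated at $w'$ gives
\begin{equation*}
\| w' \|' \leq c \| w' \|' \leq u \| w' \|',
\end{equation*}
hence $1 \leq c \leq u$.

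A more intrinsic route, which I would also record, uses (2) and does not depend on the particular choice of $w'$. Take $T$ from (2) with $r > 0$ and $s = 0$; such a $T$ is a constant-like map whose infinity norms agree. If $v$ is a point of the image of $T$ attaining both suprema, then $\| v \|_1 = \| v \|_2 = c \| v \|'$, and the hypothesis again forces $1 \leq c \leq u$. For this one needs a nonzero vector $v$ on which $\| \cdot \|_1$ and $c\| \cdot \|'$ agree. The construction in Lemma \ref{Lemma technical indistinguible under two norms} provides one directly, namely $w$ with $\| w \|_1 = \| w \|_2$. Taking a ratio at any such $v$ gives $c = \| v \|_1 / \| v \|' \in [1,u]$.

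The main obstacle is the word ``necessarily''. The norm $\| \cdot \|_2$ with property (2) is not obviously unique; I expect it is unique up to the stated form, but I have not checked this. So the bound should be derived from property (2) itself rather than from a particular construction. The route via $s=0$ handles this. A test map with Lipschitz constant $0$ is constant, say $T \equiv v$ with $v \neq 0$. Property (2) then reads $\| v \|_1 = \| v \|_2 = c\| v \|'$. Combined with $\| v \|' \leq \| v \|_1 \leq u \| v \|'$, this yields $1 \leq c \leq u$ for any admissible $\| \cdot \|_2 = c\| \cdot \|'$, which is what (1) asserts. The hypothesis $\log u \leq d(\rho_1 \| \cdot \|_1, \rho' \| \cdot \|')$ is used only to stay within the setting of Theorem \ref{Theorem same distance in components}, since it guarantees consistency with (\ref{distance of base norms for constructible norms}).
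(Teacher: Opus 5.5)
Your proposal is correct, and your first route is exactly the paper's proof. In the paper, (2) is read off from the fact that the theorem's $\|\cdot\|_2$ is the norm produced by Lemma \ref{Lemma technical indistinguible under two norms}. The bound $1 \leq c \leq u$ then follows by evaluating $\|\cdot\|' \leq \|\cdot\|_1 \leq u\|\cdot\|'$ at the vector $w' = x-y$ that defines $c$ in that lemma's proof.

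Your $s=0$ argument is a small but genuine improvement. There, $K(T,\|\cdot\|_1)=0$ makes $T \equiv v$ constant with $\|v\|_1 = \|v\|_2 = r > 0$, so $c = \|v\|_1/\|v\|' \in [1,u]$. The paper has to look inside the proof of the lemma to know which vector fixed $c$, whereas you use only the conclusion (2). You therefore get that every decomposition with $\|\cdot\|_2 = c\|\cdot\|'$ satisfying (2) also satisfies (1).

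That is the right reading of ``necessarily'', because the theorem's stated conclusions alone do not force $c \in [1,u]$. Take $\rho_1 = \rho'$ and $\|\cdot\|_1 = \|\cdot\|'$, so $u=1$. The pair $\|\cdot\|_2 = c\|\cdot\|'$, $\rho_2 = \rho'(c^{-1}\,\cdot\,,\cdot)$ meets those conclusions for every $c>1$ close enough to $1$, whereas (2) forces $c=1$.

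Two minor points. Your intermediate remark about a point ``attaining both suprema'' is superfluous once $T$ is known to be constant. The hypothesis $\log u \leq d(\rho_1\|\cdot\|_1,\rho'\|\cdot\|')$ is used by neither argument, and it holds automatically: sharpness gives $\log u = d(\|\cdot\|_1,\|\cdot\|')$, which (\ref{distance of base norms for constructible norms}) bounds by the distance of the constructible norms.
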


\begin{proof}
Note that in second paragraph of the proof of Theorem \ref{Theorem same distance in components} the existence of such $c > 0$ is assured and this depends on Lemma \ref{Lemma technical indistinguible under two norms}. While in the proof of such lemma the $c>0$ in question is the one that ensures $\| w' \|_1 = c \| w' \|'$ for any fixed $w' = x - y $ with $x , y \in C$ and $x \neq y$, and $\| \cdot \|_2 = c \| \cdot \|'$. By hypothesis for each $v \in (C-C) \setminus \{ 0 \}$ we have that

\begin{equation}
\| v \|' \leq \| v \|_1 \leq u \| v \|',
\end{equation}

\noindent in particular, $\|w\|' \leq \| w \|_1 \leq u \|w \|' $. Then $\|w\|_1 = c \|w\|'$ implies $1 \leq c \leq u$.

Property $(2)$ is proven explicitly in the first lines of the second paragraph of the proof of Theorem \ref{Theorem same distance in components}. 
\end{proof}

In the following lemma we justify the use norms only on the first cuadrant of $\mathbb{R}^2$.

\begin{lemma}\label{inequality of norms over positive face of R^2}
Let $\rho_1 , \rho_2 \in \mathcal{N}(\mathbb{R}^2)$ and $u  \geq l >0 $ such that for each $r > 0$ and $s \geq 0$

\begin{equation}
l \rho_1( r , s) \leq \rho_2 (r , s) \leq u \rho_1 (r , s).
\end{equation}

\noindent Then there exist $\rho'_1 , \rho'_2 \in \mathcal{N}(\mathbb{R}^2)$ such that $\rho'_1(r , s) = \rho_1 (r , s)$ and $\rho'_2(r , s) = \rho_2 (r,s)$ for each $r > 0$ and $s \geq 0$,  and for each $(a , b ) \in \mathbb{R}^2$

\begin{equation}
l \rho'_1 (a , b) \leq \rho'_2(a , b) \leq u \rho'_1(a , b).
\end{equation}
\end{lemma}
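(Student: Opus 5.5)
The plan is to extend each norm from the first quadrant to all of $\mathbb{R}^2$ in a canonical way that uses only the values of $\rho_i$ on the closed quadrant $Q=\{(r,s) \mid r\geq 0,\ s\geq 0\}$. Because the extended norms are built from those values alone, the two-sided inequality will transfer to them automatically. The naive choice $\rho_i(|a|,|b|)$ is not used, because it need not be a norm unless $\rho_i$ is monotone on $Q$. Instead, for $i=1,2$ we take an infimal-convolution type definition:
\begin{equation*}
\rho'_i(v) = \inf \{ \rho_i(p) + \rho_i(q) \mid p , q \in Q,\ v = p - q \}, \qquad v \in \mathbb{R}^2 .
\end{equation*}
The infimum is over a nonempty set, since every $v=(a,b)$ can be written as $p-q$ with $p=(a^+,b^+)$ and $q=(a^-,b^-)$ in $Q$. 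Hence $\rho'_i$ is finite.

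\emph{Step 1: extend the hypothesis to the closed quadrant.} The hypothesis is stated only for $r>0$, $s\geq 0$. Since $\rho_1,\rho_2$ are norms on $\mathbb{R}^2$, they are continuous, so letting $r\to 0^+$ gives $l\rho_1(0,s)\leq \rho_2(0,s)\leq u\rho_1(0,s)$. Thus the inequality $l\rho_1\leq\rho_2\leq u\rho_1$ holds on all of $Q$.

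\emph{Step 2: $\rho'_i$ is a norm and agrees with $\rho_i$ where required.}
\begin{itemize}
\item[a)] Positive homogeneity for $t>0$ follows by scaling decompositions, since $Q$ is a cone.
\item[b)] Symmetry $\rho'_i(-v)=\rho'_i(v)$ follows by exchanging $p$ and $q$. Together with a), this gives $\rho'_i(tv)=|t|\rho'_i(v)$ for all real $t$.
\item[c)] Subadditivity: if $v=p-q$ and $w=p'-q'$, then $v+w=(p+p')-(q+q')$ with $p+p',q+q'\in Q$. The triangle inequality for $\rho_i$ then gives $\rho'_i(v+w)\leq \rho'_i(v)+\rho'_i(w)$ after taking infima.
\item[d)] Lower bound: for any decomposition, $\rho_i(v)=\rho_i(p-q)\leq \rho_i(p)+\rho_i(q)$, so $\rho'_i\geq\rho_i$. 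In particular $\rho'_i$ is positive definite, hence a norm.
\item[e)] Agreement on $Q$: for $v\in Q$ the decomposition $p=v$, $q=0$ gives $\rho'_i(v)\leq\rho_i(v)$. Combined with d), $\rho'_i=\rho_i$ on $Q$, and in particular for $r>0$, $s\geq 0$.
\end{itemize}

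\emph{Step 3: the inequality on all of $\mathbb{R}^2$.} Fix $(a,b)$ and a decomposition $(a,b)=p-q$ with $p,q\in Q$. By Step 1,
\begin{equation*}
l(\rho_1(p)+\rho_1(q)) \leq \rho_2(p)+\rho_2(q) \leq u(\rho_1(p)+\rho_1(q)).
\end{equation*}
Both $\rho'_1$ and $\rho'_2$ are infima over the same set of decompositions. Taking the infimum of the middle term gives $l\rho'_1(a,b)\leq\rho'_2(a,b)$, since every middle value is at least $l\rho'_1(a,b)$. Likewise, the right inequality yields $\rho'_2(a,b)\leq u\rho'_1(a,b)$.

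The only real obstacle is Step 2: choosing an extension that is genuinely convex, since the absolute-value extension fails. The infimal-convolution formula handles this and makes the remaining verifications routine.
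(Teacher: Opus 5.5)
Your proof is correct and is essentially the paper's argument in analytic form. Your infimal-convolution $\rho'_i$ is exactly the Minkowski functional of $\mathrm{conv}\bigl(B_i\cap(Q\cup -Q)\bigr)$, the paper's set $C_i$, and your comparison over common decompositions $v=p-q$ is the functional version of the paper's inclusions $u^{-1}C_1\subset C_2\subset l^{-1}C_1$. It also checks explicitly what the paper leaves as ``not difficult to verify'': that $\rho'_i$ is a norm agreeing with $\rho_i$ on the quadrant, and the boundary case $r=0$, which you handle by continuity.
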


\begin{proof}
Let $B_1$ and $B_2$ be the unitary balls of the norms $\rho_1$ and $\rho_2$, $C'_1$ and $C'_2$ be the fraction of $B_1$ and $B_2$ that are located in the first and third quadrants without the imaginary line. Since $B_1$ and $B_2$ are convex, then $C_1 := conv (C'_1) \subset B_1$ and $C_2 := conv(C'_2) \subset B_2$. It is not difficult to verify that $C_1$ and $C_2$ are like balls $B_1$ and $B_2$, but with flat faces in the second and fourth quadrants. By (\ref{inequality of norms over positive face of R^2}) we have that for each $(a , b)$ in the first and third quadrants

\begin{equation}
l \rho_1 (a , b) \leq \rho_2(a , b) \leq u \rho_1(a , b),
\end{equation}

\noindent that is,

\begin{equation}
u^{-1} C'_1 \subset C'_2 \subset l^{-1} C'_1.
\end{equation}  

\noindent Thus

\begin{equation}
u^{-1} C_1 \subset C_2 \subset l^{-1} C_1.
\end{equation} 

\noindent Finally, the Minkowski norms $\rho'_1$ and $\rho'_2$ associated with $C_1$ and $C_2$ satisfy the required inequality. 
\end{proof}

\section{Completeness of constructible norms}

\begin{theorem}
Let $X$ be a Banach space and $C \subset X$ be a convex, closed and bounded set. Then the family of constructible norms modulus collinearity $(\mathcal{N}(\mathbb{R}^2 )\mathcal{N}(X))'$ for $BLip(C,X)$ is a $d$-complete metric space.
\end{theorem}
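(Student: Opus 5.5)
Since $(\mathcal{N}'(BLip(C,X)),d)$ is complete by Theorem \ref{N is complete}, it suffices to show that $(\mathcal{N}(\mathbb{R}^2)\mathcal{N}(X))'$ is $d$-closed in it. If $C$ is a single point, $BLip(C,X)$ is identified with $X$ through constant functions, every norm is constructible up to collinearity, and the claim reduces to Theorem \ref{N is complete}. So assume $C$ has at least two elements.

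Let $(\overline{\rho_n\|\cdot\|_n})$ be a $d$-Cauchy sequence of classes of constructible norms, with $d$-limit $\overline{\eta}\in\mathcal{N}'(BLip(C,X))$. By Theorem \ref{Theorem construct Intrin Charac} it is enough to show that $\eta(T)=\eta(S)$ whenever $T,S$ are $\phi(\eta)$-indistinguishable. A more concrete route is to build the limit directly as $\rho\|\cdot\|$. Pass to a subsequence with $d(\overline{\rho_{n}\|\cdot\|_{n}},\overline{\rho_{n+1}\|\cdot\|_{n+1}})<\varepsilon_n$, where $\sum\varepsilon_n<\infty$. Then apply Theorem \ref{Theorem same distance in components} inductively: fixing the representative $\rho_n\|\cdot\|_n$, choose $\rho_{n+1},\|\cdot\|_{n+1}$ within the class of the next term so that $d(\overline{\rho_n},\overline{\rho_{n+1}})<\varepsilon_n$ and $d(\overline{\|\cdot\|_n},\overline{\|\cdot\|_{n+1}})<\varepsilon_n$. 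This makes $(\overline{\|\cdot\|_n})$ Cauchy in $\mathcal{N}'(X)$ and $(\overline{\rho_n})$ Cauchy in $\mathcal{N}'(\mathbb{R}^2)$. By Theorem \ref{N is complete} they converge, to $\overline{\|\cdot\|}$ and $\overline{\rho}$ respectively.

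The key step is to show $d(\overline{\rho_n\|\cdot\|_n},\overline{\rho\|\cdot\|})\to 0$. Take sharp constants $l_n\rho\le\rho_n\le u_n\rho$ with $u_n/l_n\to1$. Use Corollary \ref{Corollary form of limit as U pointwise limit} to realise $\|\cdot\|$, and similarly $\rho$, as $\mathfrak{U}$-pointwise limits of suitably normalized representatives. Then compare using
\[
a_n\|\cdot\|\le\|\cdot\|_n\le b_n\|\cdot\|, \qquad b_n/a_n\to1 .
\]
These bounds give $a_n\|T\|_\infty\le\|T\|_{n,\infty}\le b_n\|T\|_\infty$, and by Lemma \ref{lemma propertis of rho with K} the Lipschitz constants satisfy $(a_n/b_n)K(T,\|\cdot\|)\le K(T,\|\cdot\|_n)\le (b_n/a_n)K(T,\|\cdot\|)$. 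Monotonicity of $\rho$ on the first quadrant, which holds for absolute norms, or an estimate via $\rho(\lambda r,\mu s)\le\max(\lambda,\mu)\rho(r,s)$ if monotone, then yields
\[
\min\!\Big(a_n,\tfrac{a_n}{b_n}\Big)\rho\|T\|\le \rho\|T\|_n\le \max\!\Big(b_n,\tfrac{b_n}{a_n}\Big)\rho\|T\| .
\]
Combined with the bounds on $\rho_n/\rho$ this gives $\varrho\to1$ after rescaling, hence $d\to0$. Since the subsequence converges and the sequence is Cauchy, the whole sequence converges to $\overline{\rho\|\cdot\|}$, which is constructible.

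The main obstacle is that a general norm on $\mathbb{R}^2$ need not be monotone on the first quadrant, so $\rho(\lambda r,\mu s)$ cannot be naively bounded by $\max(\lambda,\mu)\rho(r,s)$. There are two possible fixes. The first is the convexity bound $\rho(\lambda r,\mu s)\le \lambda\rho(r,s)+|\mu-\lambda|\,\rho(0,s)$, together with $\rho(0,s)\le c\,\rho(r,s)$ from equivalence with $\ell_\infty$ restricted to the quadrant; this gives ratios tending to $1$. The second is to apply the criterion of Theorem \ref{Theorem construct Intrin Charac} to $\eta$ directly, realising $\eta$ as a $\mathfrak{U}$-limit of the $\rho_n\|\cdot\|_n$ and using Lemma \ref{Lemma infinity norm as limit of infinity norms} for the $\infty$-parts. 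I would try the first fix, invoking Lemma \ref{inequality of norms over positive face of R^2} to pass from first-quadrant inequalities to genuine elements of $\mathcal{N}(\mathbb{R}^2)$.
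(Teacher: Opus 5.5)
Your route differs from the paper's. The paper anchors every term at the first one and takes the $\mathfrak{U}$-ultralimit $\eta$ of representatives $\rho_n\|\cdot\|_n$, with the scales $\|\cdot\|_n=c_n\|\cdot\|_n'$ fixed via Corollary \ref{Corollary technical c leq u constant projection}; it then verifies the indistinguishability criterion of Theorem \ref{Theorem construct Intrin Charac}. Chaining Theorem \ref{Theorem same distance in components} along a fast subsequence and exhibiting the limit as $\rho\|\cdot\|$ is a viable alternative. As written, however, your key step has a genuine gap.

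The class $\overline{\rho\|\cdot\|}$ is not determined by the two classes $\overline{\rho}\in\mathcal{N}'(\mathbb{R}^2)$ and $\overline{\|\cdot\|}\in\mathcal{N}'(X)$. Replacing $\|\cdot\|$ by $a\|\cdot\|$ multiplies $\|T\|_\infty$ by $a$ but leaves $K(T,\|\cdot\|)$ unchanged, so $\rho\,(a\|\cdot\|)=\rho(a\,\cdot,\cdot)\|\cdot\|$. Comparing at $(\|T\|_\infty,K)=(1,0)$ and near $(0,1)$ (Lemma \ref{values of infinite and Lipschitz}) gives $\varrho\bigl(\rho(a\,\cdot,\cdot)\|\cdot\|,\rho\|\cdot\|\bigr)\ge\max(a,a^{-1})$.

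Hence $b_n/a_n\to 1$ is not enough. Your own estimate only yields $\varrho(\rho\|\cdot\|_n,\rho\|\cdot\|)\le\max(b_n,b_n/a_n)/\min(a_n,a_n/b_n)$, which tends to $1$ only if $a_n\to1$ and $b_n\to1$. For $\|\cdot\|_n=2\|\cdot\|$ and $\rho=\ell_1$ the distance is $\log 2$ for every $n$. No ``rescaling'' repairs this, since multiplying a constructible norm by a constant does not change its class. Realising $\|\cdot\|$ through Corollary \ref{Corollary form of limit as U pointwise limit} with ``suitably normalized'' representatives makes it worse. Replacing $\|\cdot\|_n$ by $t_n\|\cdot\|_n$ forces $\rho_n$ to become $\rho_n(t_n^{-1}\,\cdot,\cdot)$ (Lemma \ref{Lemma form of constructible norms}), which destroys the closeness of the classes $\overline{\rho_n}$ you arranged. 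The paper has to handle exactly this coupling through the $c_n$ and $c=\lim_{\mathfrak{U}}c_n$.

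The missing step is to show that the specific representatives $\|\cdot\|_n$ produced by your chain converge as genuine norms, and to take $\|\cdot\|$ to be their limit without renormalizing. This is available:

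\begin{enumerate}
\item In the proof of Theorem \ref{Theorem same distance in components} (see Lemma \ref{Lemma technical indistinguible under two norms} and Corollary \ref{Corollary technical c leq u constant projection}), the new norm is normalized by $\|w'\|_{n+1}=\|w'\|_n$ for $w'=x-y$ with $x\neq y$ in $C$. You may fix the same $w'$ at every step.
\item Then all $\|\cdot\|_n$ agree at $w'$. The sharp constants in $l\|\cdot\|_n\le\|\cdot\|_{n+1}\le u\|\cdot\|_n$ therefore satisfy $l\le 1\le u$ and $u/l<e^{\varepsilon_n}$, so $e^{-\varepsilon_n}\|\cdot\|_n\le\|\cdot\|_{n+1}\le e^{\varepsilon_n}\|\cdot\|_n$.
\item Since $\sum\varepsilon_n<\infty$, the $\|\cdot\|_n$ converge as an ordinary limit, with no ultrafilter needed, to a norm $\|\cdot\|$. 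The constants are $a_n=e^{-\sum_{k\ge n}\varepsilon_k}$ and $b_n=e^{\sum_{k\ge n}\varepsilon_k}$, both tending to $1$.
\item With this, your convexity bound $\rho(\lambda r,\mu s)\le\lambda\rho(r,s)+|\mu-\lambda|\rho(0,s)$ with $\rho(0,s)\le c\,\rho(r,s)$, plus its lower counterpart, gives $\varrho(\rho\|\cdot\|_n,\rho\|\cdot\|)\to1$ uniformly in $T$.
\item Combined with $\varrho(\rho_n\|\cdot\|_n,\rho\|\cdot\|_n)\le u_n/l_n\to 1$, this closes the argument.
\end{enumerate}

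Only the scale of $\|\cdot\|$ matters. Any representative of the limit class $\overline{\rho}$ will do, since $(t\rho)\|\cdot\|\sim_c\rho\|\cdot\|$.
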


\begin{proof}
We will divide this demonstration into two parts, in the first we will choose suitable representatives of a $d$-Cauchy sequence of constructible norms and in the second part with said representatives we will show that in fact the limit of constructible norms is constructible.

First part: Let $\varepsilon> 0$, $\mathfrak{U}$ be a nontrivial ultrafilter over $\mathbb{N}$ and $\left( \overline{\eta_n(\cdot )} \right)$ be a $d$-Cauchy sequence in $(\mathcal{N}(\mathbb{R}^2 )\mathcal{N}(X))'$. Without lost of generality we may assume that for each $n \in \mathbb{N}$ it is fulfilled 

\begin{equation}
d\left(\overline{\eta_n (\cdot )} , \overline{\eta_1(\cdot)}\right) < \varepsilon.
\end{equation}

\noindent We chose $\rho_1 \| \cdot \|_1 \in \overline{\eta_1 ( \cdot )}$ for some fixed $\rho_1 \in \mathcal{N}(\mathbb{R}^2)$ and $\| \cdot \|_1 \in \mathcal{N}(X)$. For each $n \geq 1$ we call $\eta_n(\cdot ) \in \overline{\eta_n(\cdot )}$ the sharpest representative under $\rho_1\|\cdot \|_1$, that is, for each $n \in \mathbb{N}$ there exists sharp $u_n \geq 1$ with

\begin{equation}\label{technical distance eta_n and eta_1}
\eta_n(\cdot ) \leq \rho_1 \| \cdot \|_1 \leq u_n \eta_n(\cdot)
\end{equation} 

\noindent and $d(\eta_n (\cdot ) , \rho_1 \| \cdot \|_1) = \log u_n < \varepsilon$. By Theorem \ref{N is complete} there exists 

\begin{equation}\label{technical eta as d lim of eta_n}
\overline{\eta(\cdot)} := d\text{ -}\lim \overline{\eta_n(\cdot)}  \in \mathcal{ N}'(BLip(C,X)).
\end{equation}

\noindent Using Corollary \ref{Corollary form of limit as U pointwise limit} and (\ref{technical distance eta_n and eta_1}) we have that the function defined for each $T \in BLip(C,X)$ by

\begin{equation}\label{technical eta lim as eta_n lim}
\eta(T) = \displaystyle{\lim_\mathfrak{U}} \, \eta_n(T)
\end{equation}

\noindent is a representative of the $d$-limit $\overline{\eta(\cdot )}$. By Theorem \ref{EquivSeparation} and Lemma \ref{TecCol} all of the possible $\mathcal{N}(x)$ components of $\eta_n(\cdot )$ are collinear to $\phi(\eta_n(\cdot))$, thus for each $n \in \mathbb{N}$ we can chose the representative $\| \cdot \|'_n \in \overline{\phi(\eta_n(\cdot))}$ sharp under $\| \cdot \|_1$ that construct $\eta_n(\cdot )$. That is, for each $n \in \mathbb{N}$ we have that $\eta_n (\cdot ) = \theta_n \| \cdot \|'_n$ for some $\theta_n \in \mathcal{N}(\mathbb{R}^2)$ and there exists $v_n \geq 1$ sharp such that for each $x \in X$

\begin{equation}\label{technical distance of projections}
\| x \|'_n \leq \|x \|_1 \leq v_n \| x \|'_n.
\end{equation}

\noindent By Lemma \ref{Lemma phi is nonexpansive} and (\ref{technical distance eta_n and eta_1}) and (\ref{technical distance of projections}) we have that $0 \leq \log v_n \leq \log u_n < \varepsilon $. Hence $(\overline{\| \cdot \|}'_n)$ is $d$-Cauchy and by Corollary \ref{Corollary form of limit as U pointwise limit} and (\ref{technical distance of projections}) we have that a representative of the $d$-limit of $(\overline{\| \cdot \|}'_n)$ is 

\begin{equation}\label{technical form of limit of norms proof complete contructible}
\| x \|' = \displaystyle{\lim_\mathfrak{U}}\, \| x \|'_n.
\end{equation}

\noindent By Theorem \ref{Theorem same distance in components} and Lemma \ref{TecCol} for each $n \in \mathbb{N}$ we can chose $\rho_n \in \mathcal{N}(\mathbb{R}^2)$ and $\| \cdot \|_n \in \overline{\phi ( \eta_n(\cdot) )} \subset \mathcal{N}(X)$ such that 

\begin{equation}\label{technical representatives rho_n and norm_n for eta_n}
\rho_n \| \cdot \|_n = \eta_n ( \cdot),
\end{equation}

\begin{equation}
d(\rho_n , \rho_1) < \varepsilon
\end{equation}

\noindent and 

\begin{equation}
d(\| \cdot \|_n , \| \cdot \|_1) < \varepsilon.
\end{equation}

\noindent By Corollary \ref{Corollary technical c leq u constant projection} and (\ref{technical distance of projections}) for each $n \in \mathbb{N}$ there exists $1 \leq c_n \leq u_n$ such that 

\begin{equation}\label{technical form of norm n}
\| \cdot \|_n = c_n \| \cdot \|'_n
\end{equation}

\noindent and for each $r >0$, $s\geq 0$ there exists $T \in BLip(C,X)$ such that 

\begin{equation}\label{technical same infinity norm of norm_n and norm_1}
\| T \|_{1, \infty} = \| T \|_{n, \infty} = r
\end{equation}

\noindent and 

\begin{equation}\label{technical same lipschitz value of norm_n and norm_1}
K(T , \| \cdot \|_1) = K(T , \| \cdot \|_n) = s.
\end{equation}

\noindent Note that $0 \leq \log c_n \leq \log v_n \leq \log u_n \leq \varepsilon$, thus $c_n$ is bounded, then exists $c = \displaystyle{\lim_\mathfrak{U}}\, c_n$. Hence by (\ref{technical form of limit of norms proof complete contructible}) for each $x \in X$

\begin{equation}\label{technical lim norm as U lim o n norms}
\begin{split}
\| x \| & := \displaystyle{\lim_\mathfrak{U}} \, \|x \|_n \\ 
        & = \displaystyle{\lim_\mathfrak{U}} \, c_n \| x \|'_n \\
        & = c \| x \|'
\end{split}
\end{equation}

\noindent and by Lemma \ref{Lemma infinity norm as limit of infinity norms} for each $T \in BLip(C, X)$

\begin{equation}\label{technical infinity norm as limit of infinity norms}
\| T \|_\infty = \displaystyle{\lim_\mathfrak{U}} \, \| T \|_{n , \infty}.
\end{equation}

\noindent By (\ref{technical distance eta_n and eta_1}) and (\ref{technical representatives rho_n and norm_n for eta_n}) for each $n \in \mathbb{N}$ and $T \in BLip(C,X)$ we have that

\begin{equation}
\rho_n\| T \|_n \leq \rho_1 \| T \|_1 \leq u_n \rho_n \| T \|_n. 
\end{equation}

\noindent By (\ref{technical same infinity norm of norm_n and norm_1}) and (\ref{technical same lipschitz value of norm_n and norm_1}) 

\begin{equation}
\rho_n(r,s) \leq \rho_1 (r,s ) \leq u_n \rho_n (r,s)
\end{equation}

\noindent for any $n \in \mathbb{N}$, $r> 0$ and $s\geq 0$. Finally using Lemma \ref{inequality of norms over positive face of R^2} we may assume without lost of generality 

\begin{equation}\label{technical bound for rho_n}
\rho_n(a,b) \leq \rho_1 (a,b ) \leq u_n \rho_n (a,b)
\end{equation}

\noindent for each $(a, b) \in \mathbb{R}^2$.

Second part: By Theorem \ref{Theorem construct Intrin Charac} it is enough to prove that $\eta(S) = \eta(T)$ for each $\phi(\eta(\cdot ))$-indistinguishable $S, T \in BLip(C , X)$. By (\ref{technical representatives rho_n and norm_n for eta_n}) we have that 

\begin{equation}\label{technical projection of eta_n}
\begin{split}
\phi(\eta_n(x)) & = \eta_n(f_x) \\
                & = \rho_n \| f_x\|_n \\
                & = \rho_n(1,0) \|x\|_n
\end{split}
\end{equation}

\noindent Using (\ref{technical eta lim as eta_n lim}) and (\ref{technical projection of eta_n}) we note that

\begin{equation}\label{technical form of eta projection}
\phi(\eta(x))  = \displaystyle{\lim_\mathfrak{U}} \, \rho_n(1 , 0) \|x \|_n
\end{equation}

\noindent From (\ref{technical bound for rho_n}) we infer that

\begin{equation}\label{technical alpha is the value of U lim rho_n(1,0)}
0 < \alpha = \displaystyle{\lim_\mathfrak{U}} \, \rho_n(1 , 0) \leq \rho_1(1,0)
\end{equation}

\noindent Hence for each $T \in BLip(C,X)$ by (\ref{technical lim norm as U lim o n norms}), (\ref{technical form of eta projection}) and (\ref{technical alpha is the value of U lim rho_n(1,0)}) we have

\begin{equation}\label{technical infinity norm of projection}
\begin{split}
\phi(\eta(T))_\infty & = \displaystyle{\sup_{x \in C}} \, \phi(\eta (Tx) ) \\
                     & = \displaystyle{\sup_{x \in C}} \, \displaystyle{\lim_\mathfrak{U}} \, \rho_n(1 , 0) \|T x \|_n \\
                     & = \displaystyle{\sup_{x \in C}} \, \alpha  \| Tx \| \\
                     & = \alpha \| T \|_\infty.
\end{split}                  
\end{equation}

\noindent and by Remark \ref{Remark 1}, Lemma \ref{Lemma phi is nonexpansive}, (\ref{technical eta as d lim of eta_n}), (\ref{technical projection of eta_n}), (\ref{technical form of norm n}) and (\ref{technical lim norm as U lim o n norms}) we have 

\begin{equation}\label{technical Lipschitz constant of projection}
\begin{split}
K(T , \phi(\eta(\cdot ))) & = \displaystyle{\lim_\mathfrak{U}} \, K(T , \phi(\eta_n(\cdot))) \\
                          & = \displaystyle{\lim_\mathfrak{U}} \, K(T , \rho_n(0 , 1) \| \cdot \|_n) \\
                          & = \displaystyle{\lim_\mathfrak{U}} \, K(T, \| \cdot \|'_n) \\
                          & = K(T , \| \cdot \|') = K(T , c \| \cdot \|')\\
                          & = K(T , \| \cdot \|).
\end{split}
\end{equation}

Now we will prove that pointwise $\eta(T) = \displaystyle{\lim_\mathfrak{U}} \, \rho_n \| T \|_n = \displaystyle{\lim_\mathfrak{U}} \, \rho_n \| T \|$ for each $T \in BLip(C,X)$. By (\ref{technical eta lim as eta_n lim}), (\ref{technical infinity norm as limit of infinity norms}), Remark \ref{Remark 1} and the fact that $(\overline{\| \cdot \|}_n)$ is $d$-Cauchy, we have that for each $\delta > 0$ and $T \in BLip(C,X)$ there exist $I_1 , I_2, I_3 \in \mathfrak{U}$ such that

\begin{equation}\label{technical bound pointwise distance eta and eta_n}
| \rho_n \| T \|_n - \eta(T)| < \delta
\end{equation}

\noindent for each $n \in I_1$,

\begin{equation}\label{technical bound infinity norm of norm and norm_n}
| \| T \|_{n , \infty} - \| T \|_\infty | < \delta
\end{equation}

\noindent for each $n \in I_2$ and

\begin{equation}\label{technical bound Lipschitz values}
| K(T , \| \cdot \|_n) - K(T , \| \cdot \|) |< \delta
\end{equation}

\noindent for each $n \in I_3$. Thus (\ref{technical bound pointwise distance eta and eta_n}), (\ref{technical bound infinity norm of norm and norm_n}) and (\ref{technical bound Lipschitz values}) are valid for every $n \in I = I_1 \cap I_2 \cap I_3 \in \mathfrak{U}$. By (\ref{technical bound for rho_n}) for each $T \in BLip(C,X)$ and $n \in \mathbb{N}$ we have that

\begin{equation}\label{technical distance of rho norms limits}
\begin{split}
\left| \rho_n \| T \|_n -  \rho_n \| T \| \right|  & = \left| \rho_n (\| T \|_{n , \infty} , K(T , \| \cdot \|_n)) - \rho_n (\| T \|_\infty , K(T , \| \cdot \|) \right| \\
                             & \leq  \rho_n (\| T \|_{n , \infty} - \| T \|_\infty ,  K(T , \| \cdot \|_n) - K(T , \| \cdot \|) ) \\
                             & \leq \rho_1 (\| T \|_{n , \infty} - \| T \|_\infty ,  K(T , \| \cdot \|_n) - K(T , \| \cdot \|)  ).
\end{split}
\end{equation}

\noindent Since (\ref{technical distance of rho norms limits}) is valid for every $n \in  \mathbb{N}$, then by (\ref{technical bound pointwise distance eta and eta_n}), (\ref{technical bound infinity norm of norm and norm_n}) and (\ref{technical bound Lipschitz values}) we have that 

\begin{equation}
\displaystyle{\lim_{I , \mathfrak{U}}} \, \left| \rho_n \| T \|_n -  \rho_n \| T \| \right| =0 
\end{equation}

\noindent for every $T \in BLip(C,X)$. Then

\begin{equation}\label{technical simplified form of eta as limit}
\eta(T) = \displaystyle{\lim_\mathfrak{U}} \, \rho_n \| T \|_n  = \displaystyle{\lim_\mathfrak{U}} \,  \rho_n \| T \|.
\end{equation}

We will finish by proving that $\eta(\cdot )$ is constructible. Let $\phi(\eta(\cdot ))$-indistinguishable $S,T \in BLip(C,X)$. That is,

\begin{equation}
\phi(\eta(S))_\infty = \phi (\eta(T))_\infty
\end{equation}

\noindent and 

\begin{equation}
K(S , \phi (\eta(\cdot))) = K(T , \phi (\eta(\cdot))).
\end{equation}

\noindent By (\ref{technical infinity norm of projection}) and (\ref{technical Lipschitz constant of projection}) we have that

\begin{equation}\label{same infinity value}
\| S \|_\infty = \| T \|_\infty
\end{equation}

\noindent and 

\begin{equation}\label{same lipschitzian value}
K(S , \| \cdot \|) = K(T , \| \cdot \|).
\end{equation}

\noindent Then using (\ref{technical simplified form of eta as limit}), (\ref{same infinity value}) and (\ref{same lipschitzian value}) 

\begin{equation}
\begin{split}
\eta(S) & = \displaystyle{\lim_\mathfrak{U}} \, \rho_n \| s \| \\
        & = \displaystyle{\lim_\mathfrak{U}} \, \rho_n(\| S \|_\infty , K(S , \| \cdot \|)) \\
        & = \displaystyle{\lim_\mathfrak{U}} \, \rho_n(\| T \|_\infty , K(T , \| \cdot \|)) \\
        & = \displaystyle{\lim_\mathfrak{U}} \, \rho_n \| s \| \\
        & = \eta(T).
\end{split}
\end{equation}

\noindent Thus, by Theorem \ref{Theorem construct Intrin Charac} we conclude that $\eta(\cdot)$ is constructible, that is, the $d$-limit of constructible norms is constructible.
\end{proof}

\end{document}